\documentclass{article}
\usepackage[top=30truemm,bottom=30truemm,left=25truemm,right=25truemm]{geometry}
\usepackage[dvipdfmx]{graphicx}
\usepackage{amsmath, amssymb}
\usepackage{amsthm}
\usepackage{autobreak}
\usepackage{amsfonts}
\usepackage{mathdots}
\usepackage{fancybox}
\usepackage{esint}
\usepackage{mathrsfs}
\usepackage{mathtools}
\usepackage{ascmac}
\usepackage{comment}
\usepackage{proof}
\usepackage{thm-restate}
\usepackage{setspace}
\theoremstyle{definition}
\newtheorem{theorem}{Theorem}
\newtheorem{definition}[theorem]{Definition}

\newtheorem{proposition}[theorem]{Proposition}

\usepackage{amscd}
\usepackage[all]{xy}
\usepackage{color}
\usepackage{cases}
\usepackage{here}
\usepackage{bm}
\usepackage{authblk}
\title{Compact embedding from variable-order Sobolev space to $L^{q(x)}(\Omega)$ and its application to Choquard equation with variable order and variable critical exponent
}
\author{Masaki Sakuma\thanks{Email: masakisakuma0110@gmail.com}}
\affil{Graduate School of Mathematical Sciences,\\ The University of Tokyo, Meguro-ku, Tokyo, Japan}

\begin{document}
\maketitle
\begin{abstract}
In this paper, we prove the compact embedding from the variable-order Sobolev space $W^{s(x,y),p(x,y)}_0 (\Omega)$ to the Nakano space $L^{q(x)}(\Omega)$ with a critical exponent $q(x)$ satisfying some conditions. It is noteworthy that the embedding can be compact even when $q(x)$ reaches the critical Sobolev exponent $p_s^*(x)$. 
As an application, we obtain a nontrivial solution of the Choquard equation 
\begin{equation*}
\displaystyle (-\Delta)_{p(\cdot,\cdot)}^{s(\cdot,\cdot)}u+|u|^{p(x,x)-2}u=\left(\int_{\Omega}\frac{|u(y)|^{r(y)}}{|x-y|^{\frac{\alpha(x)+\alpha(y)}{2}}}dy\right) |u(x)|^{r(x)-2}u(x)\quad\text{in $\Omega$}
\end{equation*}
with variable upper critical exponent in the sense of Hardy-Littlewood-Sobolev inequality under an appropriate boundary condition. 

\vspace{1ex}\par
{\flushleft{\textbf{Keywords:} Choquard equations; Variable exponents; Variational methods; Critical growth}}
{\flushleft{\textbf{MSC2020:} 35J92; 35A15; 35B33; 35R11; 35A01}}
\end{abstract}
\section{Introduction}
In the present paper, we prove the existence of a nontrivial solution of the variable-order $p(\cdot,\cdot)$-Choquard equation 
\begin{equation}\label{cho}
\left\{
\begin{array}{rl}
\displaystyle (-\Delta)_{p(\cdot,\cdot)}^{s(\cdot,\cdot)}u(x)+|u(x)|^{p(x,x)-2}u(x)=\left(\int_{\Omega}\frac{|u(y)|^{r(y)}}{|x-y|^{\frac{\alpha(x)+\alpha(y)}{2}}}dy\right) |u(x)|^{r(x)-2}u(x) &\text{in $\Omega$} \\
u=0 &\text{on $\partial\Omega$}
\end{array}
\right.
\end{equation}
with variable critical exponent. Here, ``critical'' means that $|u|^{r(x)}$ in the Choquard term can have the upper critical growth in the sense of the Hardy-Littlewood-Sobolev inequality. $\Omega\subset\mathbb{R}^N$ is a bounded Lipschitz domain. In the equation \eqref{cho}, $\alpha\in C(\overline{\Omega})$ satisfies $0<\alpha^{-}\coloneqq\inf\alpha\leq\alpha^{+}\coloneqq\sup\alpha<N$; $s\in C^{\mathrm{log}}(\overline{\Omega}\times \overline{\Omega})$ is a symmetric log-H\"{o}lder function that satisfies $0<s^{-}\coloneqq \inf s\leq s^{+}\coloneqq \sup s<1$; $p\in C^{\mathrm{log}}(\overline{\Omega}\times \overline{\Omega})$ is a symmetric log-H\"{o}lder function that satisfies $1<p^{-}\coloneqq \inf p\leq p^{+}\coloneqq \sup p<N/s^{+}$, and $r\in C(\overline{\Omega})$ satisfies
\[
\left(1-\frac{\alpha^{-}}{2N}\right)p(x,x)\leq r(x)\leq \left(1-\frac{\alpha^{+}}{2N}\right)p_s^*(x),
\]
where $\displaystyle p_s^*(x)\coloneqq \frac{Np(x,x)}{N-p(x,x)s(x,x)}$ is the critical Sobolev exponent. If
\[
\left\{x\in\Omega\, \middle|\, r(x)= \left(1-\frac{\alpha^{+}}{2N}\right)p_s^*(x)\right\}\neq\emptyset,
\]
then we say $|u|^{r(x)}$ has the upper critical growth in the sense of Hardy-Littlewood-Sobolev inequality. Similarly, if
\[
\left\{x\in\Omega\, \middle|\, r(x)= \left(1-\frac{\alpha^{-}}{2N}\right)p(x,x) \right\}\neq\emptyset,
\]
then we say $|u|^{r(x)}$ has the lower critical growth in the sense of Hardy-Littlewood-Sobolev inequality. \par
In the equation \eqref{cho}, the variable-order fractional $p(\cdot,\cdot)$-Laplace operator $(-\Delta)_{p(\cdot,\cdot)}^{s(\cdot,\cdot)}$ in $\Omega$ is given by the Cauchy principal value
\[
(-\Delta)_{p(\cdot,\cdot)}^{s(\cdot,\cdot)}u(x)\coloneqq \lim_{\varepsilon\to +0}\int_{\Omega\setminus B_\varepsilon (x)}\frac{|u(x)-u(y)|^{p(x,y)-2}(u(x)-u(y))}{|x-y|^{N+s(x,y)p(x,y)}}dy
\]
for $u\in C_c^\infty(\Omega)$ and its extension to the function space $W_{0}^{s(x,y),p(x,y)}(\Omega)$ defined in the next section. This operator is a generalization of the regional fractional Laplacian. We adopt this definition consistently from the next section onward. On the other hand, there is another definition of variable-order fractional $p(\cdot,\cdot)$-Laplacian:
\[
(-\Delta)_{p(\cdot,\cdot),\mathrm{res}}^{s(\cdot,\cdot)}u(x)\coloneqq \lim_{\varepsilon\to +0}\int_{\mathbb{R}^N\setminus B_\varepsilon (x)}\frac{|u(x)-u(y)|^{p(x,y)-2}(u(x)-u(y))}{|x-y|^{N+s(x,y)p(x,y)}}dy
\]
for the extension of any $u\in C_c^\infty(\Omega)$ by zero to $\mathbb{R}^N$, which is a generalization of the restricted fractional Laplacian and is adopted in \cite{variable-order_Xiang,variable-order_Cheng,Gan}. In such a case, to deal with an extension of the zero Dirichlet boundary condition, the boundary condition should be described as the exterior one $u=0$ in $\mathbb{R}^N\setminus\Omega$, not $u=0$ on $\partial\Omega$. However, this definition requires that $s(\cdot,\cdot)$ and $p(\cdot,\cdot)$ are defined in the whole space $\mathbb{R}^N$ and hence is inconsistent with our setting. In the case of constant exponents, some literature define
\[
(-\Delta)_{p}^{s}u(x)\coloneqq C_{N,s,p}\lim_{\varepsilon\to +0}\int_{\mathbb{R}^N\setminus B_\varepsilon (x)}\frac{|u(x)-u(y)|^{p-2}(u(x)-u(y))}{|x-y|^{N+ps}}dy
\]
with a scaling factor
\[
C_{N,s,p}\coloneqq\frac{2^{2s-2}ps(1-s)}{\pi^{\frac{N-1}{2}}}\cdot \frac{\Gamma(\frac{N+ps}{2})}{\Gamma(\frac{p+1}{2})\Gamma(2-s)}
\]
mainly to make it consistent with the normal $p$-Laplacian as $s\to 1^{-}$. Meanwhile, in the case of variable exponents, the scaling factor no longer makes sense, so we omit it. In the non-fractional case where $s\equiv 1$, the operator $(-\Delta)_{p(\cdot,\cdot)}^{s(\cdot,\cdot)}$ corresponds to $p(x)$-Laplacian for $p(x)=p(x,x)$, which is defined by
\[
(-\Delta)_{p(x)}u\coloneqq -\operatorname{div}(|\nabla u|^{p(x)-2}\nabla u).
\]

Throughout this paper, $B_r(x)$ denotes the open ball with radius $r$ centered at $x$ in $\mathbb{R}^N$; $C$, $C'$, $C_i$ and $C_i'$ represent various positive constants; $\|\cdot\|_q$ denotes the $L^q$ norm; $\chi_A$ denotes the indicator function of $A$; $C_c^\infty(\Omega)$ denotes the set of all $C^\infty$ functions compactly supported in $\Omega$; $C^\mathrm{log}(\Omega)$ denotes the set of all log-H\"{o}lder continuous functions defined in $\Omega$. The definition of log-H\"{o}lder continuity will be described in the section of embeddings later. We adopt the notation $\Phi^{+}\coloneqq\sup\Phi$ and $\Phi^{-}\coloneqq\inf\Phi$ for any variable exponent $\Phi$. In addition, we introduce the quantity $\displaystyle\sigma_\alpha(x)\coloneqq \frac{2N}{2N-\alpha(x)}$ depending on $\alpha(x)$. \par

The main result in this paper is as follows:
\begin{theorem}\label{CriticalCompactMountainPass}
Assume the continuous embedding $W^{s(x,y),p(x,y)}(\Omega)\hookrightarrow L^{p_s^*(x)}(\Omega)$ holds. Moreover, assume the following condition on touching rate of $r(x)$ to the upper critical exponent: 
\begin{itemize}
\item[(TR)] There exist $x_0\in\Omega$, $\beta\in (0,1)$, $C_0>0$ and $\eta>0$ such that for any $\rho\in (0,\eta)$, we have
\[
\max_{x\in \partial B_\rho (x_0)}r(x)\leq \min_{x\in \partial B_\rho (x_0)} \left(1-\frac{\alpha^{+}}{2N}\right) p_s^*(x)-\frac{C_0}{(-\log\rho)^\beta}.
\]
\end{itemize}
Then there exists a nontrivial weak solution $u\in W_{0}^{s(x,y),p(x,y)}(\Omega)$ to \eqref{cho}. 
\end{theorem}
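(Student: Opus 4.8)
The plan is to obtain a solution of \eqref{cho} as a nontrivial critical point of the energy functional $J\colon X\to\mathbb{R}$, $X\coloneqq W_0^{s(x,y),p(x,y)}(\Omega)$, given by
\[
J(u)=\Phi(u)-\tfrac12\,\mathcal{B}(u),\qquad \mathcal{B}(u)\coloneqq\int_\Omega\int_\Omega\frac{|u(x)|^{r(x)}|u(y)|^{r(y)}}{|x-y|^{\frac{\alpha(x)+\alpha(y)}{2}}}\,dx\,dy,
\]
where $\Phi(u)\coloneqq\displaystyle\int_{\Omega\times\Omega}\frac{|u(x)-u(y)|^{p(x,y)}}{p(x,y)\,|x-y|^{N+s(x,y)p(x,y)}}\,dx\,dy+\int_\Omega\frac{|u(x)|^{p(x,x)}}{p(x,x)}\,dx$, so that the weak solutions of \eqref{cho} are exactly the critical points of $J$. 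The first task is to check that $J\in C^1(X,\mathbb{R})$. Since $r(x)\le(1-\frac{\alpha^+}{2N})p_s^*(x)$ and $\alpha(x)\le\alpha^+$, one has $r(x)\sigma_\alpha(x)\le p_s^*(x)$, so the hypothesised continuous embedding $X\hookrightarrow L^{p_s^*(x)}(\Omega)$ together with $|\Omega|<\infty$ gives $X\hookrightarrow L^{r(x)\sigma_\alpha(x)}(\Omega)$; a variable-exponent Hardy--Littlewood--Sobolev inequality then controls $\mathcal{B}(u)$ and $\mathcal{B}'(u)$ by the $L^{r(x)\sigma_\alpha(x)}$-norm of $u$, while the $C^1$-regularity of $\Phi$ and the explicit form of $\Phi'$ (the variable-order fractional $p(\cdot,\cdot)$-Laplacian plus the $L^{p(x,x)}$ duality term) are standard.

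Next I would verify the mountain-pass geometry. Because $r(x)\ge(1-\frac{\alpha^-}{2N})p(x,x)$ with $\alpha^-<N$, the Choquard term has strictly higher homogeneity than $\Phi$, i.e.\ $2r^->p^+$; hence the norm--modular inequalities give $\varrho,\delta>0$ with $J(u)\ge\delta$ for $\|u\|=\varrho$, and for any fixed $u_0\in X\setminus\{0\}$ one has $J(tu_0)\le C t^{p^+}-\tfrac12 t^{2r^-}\mathcal{B}(u_0)\to-\infty$ as $t\to+\infty$ (note $\mathcal{B}(u_0)>0$ since the kernel is positive), so some $e=t_0u_0$ satisfies $\|e\|>\varrho$ and $J(e)<0$. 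With $J(0)=0$, set $c\coloneqq\inf_{\gamma}\max_{t\in[0,1]}J(\gamma(t))\ge\delta>0$, the infimum being over continuous paths $\gamma\colon[0,1]\to X$ with $\gamma(0)=0$, $\gamma(1)=e$.

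The crux is the Palais--Smale condition at level $c$, and this is where hypothesis (TR) enters. Let $(u_n)\subset X$ satisfy $J(u_n)\to c$ and $J'(u_n)\to0$ in $X^*$. An Ambrosetti--Rabinowitz-type estimate on $J(u_n)-\tfrac1\theta\langle J'(u_n),u_n\rangle$ for $\theta\in(p^+,2r^-)$, using $\langle\mathcal{B}'(u),u\rangle\ge 2r^-\mathcal{B}(u)\ge0$ and $\Phi(u)\ge\tfrac1{p^+}\langle\Phi'(u),u\rangle$, shows $(u_n)$ is bounded in $X$; passing to a subsequence, $u_n\rightharpoonup u$ in $X$. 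Put $q(x)\coloneqq r(x)\sigma_\alpha(x)\le p_s^*(x)$. Because $p_s^*$ is log-H\"{o}lder continuous (as $p$ and $s$ are) and $\alpha$ is continuous, hypothesis (TR) for $r$ around $x_0$ propagates to a touching-rate bound of the same type for $q$ relative to $p_s^*$ (same exponent $\beta$, possibly smaller constant); this is precisely the hypothesis of the compact embedding theorem of this paper, which therefore yields that $X\hookrightarrow L^{q(x)}(\Omega)$ is \emph{compact}. Consequently $u_n\to u$ in $L^{r(x)\sigma_\alpha(x)}(\Omega)$, and the Hardy--Littlewood--Sobolev inequality together with continuity of the relevant Nemytskii operators gives $\mathcal{B}'(u_n)\to\mathcal{B}'(u)$ in $X^*$, so in particular $\langle\mathcal{B}'(u_n),u_n-u\rangle\to0$. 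Hence
\[
\langle\Phi'(u_n),u_n-u\rangle=\langle J'(u_n),u_n-u\rangle+\tfrac12\langle\mathcal{B}'(u_n),u_n-u\rangle\longrightarrow0,
\]
and since $\Phi'$ is of type $(S_+)$ on $X$ and $u_n\rightharpoonup u$, we get $u_n\to u$ strongly in $X$. Thus $J$ satisfies the Palais--Smale condition at level $c$ (and a Cerami-type argument would serve equally well if one prefers to weaken the boundedness step).

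By the Mountain Pass Theorem, $J$ then possesses a critical point $u\in X$ with $J(u)=c>0=J(0)$; in particular $u\neq0$, and $u$ is a nontrivial weak solution of \eqref{cho}. I expect the main obstacle to be precisely the strong-convergence step of the Palais--Smale analysis: one must convert the a priori merely continuous, possibly critical behaviour of the nonlocal term along $(u_n)$ into genuine compactness, which is exactly what (TR) buys through the paper's compact embedding theorem. Secondary technical points are the careful transfer of condition (TR) from $r$ to $q=r\sigma_\alpha$, the variable-exponent bookkeeping in the geometry and boundedness steps, and the verification of the $(S_+)$ property of $\Phi'$ in the variable-order fractional setting.
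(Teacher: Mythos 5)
Your proposal follows the same route as the paper's own proof: set up the energy functional, verify the mountain-pass geometry, obtain boundedness of a Palais--Smale sequence by an Ambrosetti--Rabinowitz-type estimate with $\beta\in(1/(2r^-),1/p^+)$, invoke the paper's critical compact embedding theorem (Theorem~\ref{CriticalCompactEmbedding}) via the touching-rate hypothesis (TR) to recover compactness of the nonlocal Choquard term, and finally upgrade weak to strong convergence. The only substantive difference is the final step: you close with the $(S_+)$ property of $\Phi'$, while the paper derives $\varrho(u_n)\to\varrho(u)$ from $I'[u_n](u_n)\to 0$ together with the convergence of the Choquard energy and then appeals to uniform convexity of $W^{s(x,y),p(x,y)}(\Omega)$; both are standard closings.

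One slip worth flagging is the choice $q(x)=r(x)\sigma_\alpha(x)$. The variable-exponent Hardy--Littlewood--Sobolev inequality quoted in the paper (Proposition~\ref{HLS}) controls the Choquard term by the norms of $|u|^{r(x)}$ in $L^{\sigma_\alpha^+}(\Omega)$ and $L^{\sigma_\alpha^-}(\Omega)$, i.e.\ by $\|u\|_{r(x)\sigma_\alpha^+}$ and $\|u\|_{r(x)\sigma_\alpha^-}$ with the \emph{constant} factors $\sigma_\alpha^\pm$; strong convergence in $L^{r(x)\sigma_\alpha(x)}(\Omega)$ alone is strictly weaker than convergence in $L^{r(x)\sigma_\alpha^+}(\Omega)$ and does not yield continuity of $\mathcal B$ and $\mathcal B'$ directly. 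The paper instead invokes the compact embedding into $L^{r(x)\sigma_\alpha^+}(\Omega)$ (and $L^{r(x)\sigma_\alpha^-}(\Omega)$), and the hypothesis (TR) is formulated precisely so that the exponent $q(x)=r(x)\sigma_\alpha^+$ satisfies \eqref{CriticalRate}, using $1-\alpha^+/(2N)=1/\sigma_\alpha^+$. Replacing your $q$ by $r(x)\sigma_\alpha^+$ (and using the subcritical case for $r(x)\sigma_\alpha^-$) fixes this, so the argument otherwise stands as written.
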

At the same time, we discuss a sufficient condition to ensure the critical continuous Sobolev-type embedding
\[
W^{s(x,y),p(x,y)}(\Omega)\hookrightarrow L^{p_s^*(x)}(\Omega).
\]
\begin{restatable}{theorem}{CriticalContinuousEmbedding} \label{CriticalContinuousEmbedding}
Let $\Omega\subset\mathbb{R}^N$ be a bounded Lipschitz domain. Assume that symmetric and log-H\"{o}lder continuous functions $p:\overline{\Omega}^2\to\mathbb{R}$ and $s: \overline{\Omega}^2\to\mathbb{R}$ satisfy $0<s^{-}\leq s^{+}<1$ and $1<p^{-}\leq p^{+}<N/s^{+}$. In addition, assume that any point of the diagonal set $\{(x,y)\in \overline{\Omega}^2 \mid x=y\}$ is a (not necessarily strict) local minimum point of $s(x,y)$ and $p(x,y)$. Let $q:\overline{\Omega}\to\mathbb{R}$ be a continuous function such that $p^{+}\leq q^{-}\leq q(x)\leq p_s^*(x)$. Then, the continuous embedding $W^{s(x,y),p(x,y)}(\Omega)\hookrightarrow L^{q(x)}(\Omega)$ holds.
\end{restatable}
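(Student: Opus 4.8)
The plan is to reduce first to the endpoint target exponent. Because $\Omega$ is bounded and $q(x)\le p_s^*(x)$ pointwise, the elementary embedding of variable Lebesgue spaces on a set of finite measure gives $L^{p_s^*(\cdot)}(\Omega)\hookrightarrow L^{q(\cdot)}(\Omega)$, so it suffices to prove $W^{s(x,y),p(x,y)}(\Omega)\hookrightarrow L^{p_s^*(\cdot)}(\Omega)$. Two structural facts will be used repeatedly: writing $p_s^*(x)=\Phi\big(p(x,x),s(x,x)\big)$ with $\Phi(t,u)=Nt/(N-tu)$ — which is locally Lipschitz on the relevant compact range of $(t,u)$ since $p^{+}s^{+}<N$ — one sees that $p_s^*$ is log-H\"older continuous on $\overline\Omega$; and since every diagonal point is a local minimum of $s$ and of $p$ and $\Phi$ is increasing in each argument, every point of $\overline\Omega$ is a local minimum of $p_s^*$.

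Next I would localise. By compactness, cover $\overline\Omega$ by finitely many balls $B_i=B_{r_i}(z_i)$ with $z_i\in\overline\Omega$, where $r_i$ is chosen small enough that: $\operatorname{diam}B_i<1$; the set $\Omega_i:=\Omega\cap B_i$ is an extension domain for the constant-order Sobolev space $W^{s_i,p_i}$ with $s_i:=s(z_i,z_i)$, $p_i:=p(z_i,z_i)$ (possible because $\Omega$ is Lipschitz); on $B_i\times B_i$ one has $p(x,y)\ge p_i$ and $s(x,y)\ge s_i$, hence $s(x,y)p(x,y)\ge s_ip_i$; and on $B_i$ one has $p_i^{*}\le p_s^*(x)\le p_i^{*}+C_i/(-\log|x-z_i|)$, where $p_i^{*}:=\Phi(p_i,s_i)$, the lower bound coming from the local-minimum property and the upper bound from log-H\"older continuity of $p_s^*$ together with $p_s^*(z_i)=p_i^{*}$. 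Then $\int_\Omega|u|^{p_s^*(x)}\,dx\le\sum_i\int_{\Omega_i}|u|^{p_s^*(x)}\,dx$, and it remains to bound each summand by a constant plus a power of $\|u\|_{W^{s(x,y),p(x,y)}(\Omega)}$.

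On a fixed patch $\Omega'=\Omega_i$ (dropping the index; write $s_0=s_i$, $p_0=p_i$, $z_0=z_i$) I would argue in two steps. First, a modular comparison: $u\in W^{s(x,y),p(x,y)}(\Omega')$ implies $u\in W^{s_0,p_0}(\Omega')$ with $\|u\|_{W^{s_0,p_0}(\Omega')}\lesssim 1+\|u\|_{W^{s(x,y),p(x,y)}(\Omega')}$. The Lebesgue part is immediate since $p_0\le p(x,x)$ on $\Omega'$. For the Gagliardo part one splits $(\Omega')^2$ according to whether $|u(x)-u(y)|\le 1$ and whether $|x-y|<\tau$ for a small fixed $\tau$: where $|u(x)-u(y)|\ge 1$ the constant-order integrand is pointwise below the variable one (because $p_0\le p(x,y)$, $s_0p_0\le s(x,y)p(x,y)$ and $|x-y|<1$); where $|u(x)-u(y)|<1$ and $|x-y|\ge\tau$ it is controlled by $C\big(1+|u(x)|^{p(x,x)}+|u(y)|^{p(y,y)}\big)$; and the remaining near-diagonal piece $|u(x)-u(y)|<1$, $|x-y|<\tau$ is the delicate one, handled by the by-now-standard log-H\"older device pairing $\log(1/|x-y|)$ with the small oscillation of $p$ and $s$ on $B_i$. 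Granting this, the classical fractional Sobolev inequality for constant exponents, $W^{s_0,p_0}(\Omega')\hookrightarrow L^{p_0^{*}}(\Omega')$ (via the extension above), gives $\|u\|_{L^{p_0^{*}}(\Omega')}\lesssim 1+\|u\|_{W^{s(x,y),p(x,y)}(\Omega')}$. Second, the critical bridge: on $\Omega'$ one has $p_s^*(x)\le p_0^{*}+C_0/(-\log|x-z_0|)$ with $p_0^{*}=\lim_{x\to z_0}p_s^*(x)$; splitting $\{|u|\le 1\}$, which contributes at most $|\Omega'|$, and $\{|u|>1\}$, where $|u(x)|^{p_s^*(x)}\le|u(x)|^{p_0^{*}}\cdot|u(x)|^{C_0/(-\log|x-z_0|)}$, a standard lemma on a variable exponent decaying to a constant at a log-H\"older rate yields $\int_{\Omega'}|u|^{p_s^*(x)}\,dx\lesssim \int_{\Omega'}|u|^{p_0^{*}}\,dx+1$. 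Combining the two steps and summing over $i$ produces a bound $\int_\Omega|u|^{p_s^*(x)}\,dx\le C\big(1+\|u\|_{W^{s(x,y),p(x,y)}(\Omega)}^{\,m}\big)$ for some $m$, whence the continuous embedding by the usual passage from modular to norm estimates.

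The crux, and the step I expect to be the main obstacle, is the critical regime itself: there is no subcritical slack, so one cannot simply dominate $L^{p_s^*(\cdot)}$ on a patch by a constant-exponent Sobolev space, because $p_s^*(x)$ genuinely exceeds the best available constant Sobolev-critical exponent $p_i^{*}$ near each $z_i$. It is precisely the local-minimum hypothesis that rescues this: it forces $\inf_{B_i^2}p=p_i$ and $\inf_{B_i^2}(sp)=s_ip_i$, so the largest constant critical exponent one can extract on $B_i$ equals the limiting value $p_s^*(z_i)$ of the target, and the residual oscillation $p_s^*(x)-p_s^*(z_i)$ is then nonnegative and tends to $0$ at the log-H\"older rate, hence is absorbable. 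Concretely I expect the two genuinely technical points to be (i) the near-diagonal piece of the modular comparison, where the variable order $s(x,y)$ makes the usual one-variable log-H\"older arguments a little more involved, and (ii) the absorption lemma in the critical bridge; both are classical in spirit but must be carried out with care.
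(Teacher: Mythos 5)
Your overall strategy (localize into small patches, compare to a constant-exponent Sobolev space on each patch, invoke the constant-exponent critical Sobolev inequality, sum) is the same in spirit as the paper's: the paper covers $\Omega$ by small cubes $Q_i$, compares the variable Gagliardo seminorm to $[\,\cdot\,]_{s_i^-,p_i^-;Q_i}$ via the weighted measure $d\mu_i=|x-y|^{-(N-s_i^-p_i^-)}\,dxdy$ and the auxiliary function $F(x,y)=|v(x)-v(y)|/|x-y|^{2s_i^-}$ (so that the near-diagonal comparison becomes a standard $L^{p_i^-}_{\mu_i}\hookleftarrow L^{p(\cdot,\cdot)}_{\mu_i}$ embedding over a finite measure), then extends and applies the constant-exponent critical embedding, and finally removes an auxiliary hypothesis by a Fatou/approximation argument. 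Your near-diagonal case is left at the level of a sketch; it is not wrong, but the two factors $|u(x)-u(y)|^{p_0-p(x,y)}\ge 1$ and $|x-y|^{s(x,y)p(x,y)-s_0p_0}\le 1$ pull in opposite directions, and the clean way to decouple them is precisely the paper's change of function and measure (or a further split on whether $|u(x)-u(y)|\ge|x-y|^K$ for a suitable $K$).

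The genuine gap is in your ``critical bridge.'' The inequality you invoke, $\int_{\Omega'}|u|^{p_0^*+\theta(x)}\,dx\lesssim\int_{\Omega'}|u|^{p_0^*}\,dx+1$ with $\theta(x)=C_0/(-\log|x-z_0|)$, is false as stated: on any fixed annulus $A\subset\Omega'$ away from $z_0$ one has $\theta\ge\theta_0>0$, so the claimed bound would force $L^{p_0^*}(A)\subset L^{p_0^*+\theta_0}(A)$ with a uniform modular estimate, which fails on any set of finite positive measure (take $u_n=|E_n|^{-1/p_0^*}\chi_{E_n}$ with $E_n\subset A$, $|E_n|\to 0$). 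There is no Kurata--Shioji--type ``absorption'' lemma at this level of generality; what Kurata--Shioji prove is a \emph{compactness} statement under a log-rate condition, not an enlargement of the Lebesgue exponent controlled by the $L^{p_0^*}$ modular alone.

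What rescues the argument --- and what you did not use --- is that the local-minimum hypothesis actually forces $p_s^*$ to be \emph{constant}. If every $(z,z)$ is a local minimum of $p$ on $\overline\Omega{}^2$, then restricting to the diagonal shows every $z$ is a local minimum of $z\mapsto p(z,z)$ on the connected set $\overline\Omega$, hence $p(z,z)$ is constant, and likewise $s(z,z)$; this is exactly why the paper can speak of ``the common local minimum $\underline p$, $\underline s$.'' Consequently $p_s^*(x)\equiv N\underline p/(N-\underline p\,\underline s)$ is a single number, your correction $\theta$ is identically zero, and the bridge step is vacuous. With this observation in place (and the near-diagonal modular comparison actually carried out), your route closes. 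As written, though, you lean on a false estimate and miss the structural fact that makes it unnecessary.
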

This extends the result of the critical embedding in \cite{s_variable_exponent_CC} to the case where $s(x,y)$ is variable. Note that the uniform continuity of exponents does not assure the critical continuous embedding and we need some additional condition such as the log-H\"{o}lder continuity of exponents even if $s\equiv 1$ (see Corollary 8.3.2 and Proposition 8.3.7 in \cite{Diening}). \par
Furthermore, we also found a sufficient condition for the embedding $W^{s(x,y),p(x,y)}(\Omega)\hookrightarrow L^{q(x)}(\Omega)$ to be compact even though $q(x)$ reaches the critical Sobolev exponent. 
\begin{restatable}{theorem}{CriticalCompactEmbedding} \label{CriticalCompactEmbedding}
Let $\Omega\subset\mathbb{R}^N$ be a bounded Lipschitz domain. Let $s,p\in C^{\mathrm{log}}(\overline{\Omega}\times\overline{\Omega})$ be symmetric log-H\"{o}lder functions satisfying $0<s^{-}\leq s^{+}<1<p^{-}\leq p^{+}<N/s^{+}$ and $q\in C(\overline{\Omega})$ be a continuous function such that $p^{+}\leq q^{-}\leq q(x)\leq p_s^*(x)$ ($\forall x\in\Omega$). Assume the continuous embedding $W^{s(x,y),p(x,y)}(\Omega)\hookrightarrow L^{p_s^*(x)}(\Omega)$ holds. 
Suppose that there exist $x_0\in\Omega$, $\beta\in (0,1)$, $C_0>0$ and $\eta>0$ such that for any $\rho\in (0,\eta)$, we have
\begin{equation}\label{CriticalRate}
\max_{x\in \partial B_\rho (x_0)}q(x)\leq \min_{x\in \partial B_\rho (x_0)}p_s^*(x)-\frac{C_0}{(-\log\rho)^\beta}
\end{equation}
and
\[
\inf_{x\in\Omega\setminus B_\eta (x_0)}(p_s^*(x)-q(x))>0
\]
Then, the embedding $W^{s(x,y),p(x,y)}(\Omega)\hookrightarrow L^{q(x)}(\Omega)$ is compact. 
\end{restatable}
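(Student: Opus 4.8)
The plan is to take an arbitrary sequence $(u_n)_n$ bounded in $W^{s(x,y),p(x,y)}(\Omega)$ and show it has a subsequence converging strongly in $L^{q(x)}(\Omega)$ to its weak limit. Since $p^{-}>1$ the space $W^{s(x,y),p(x,y)}(\Omega)$ is reflexive, so after relabelling we may assume $u_n\rightharpoonup u$ weakly; by the assumed continuous embedding into $L^{p_s^*(x)}(\Omega)$ there is $C>0$ with $\int_\Omega|u_n|^{p_s^*(x)}\,dx\le C$ for all $n$, and $u\in L^{p_s^*(x)}(\Omega)\subseteq L^{q(x)}(\Omega)$ (as $|\Omega|<\infty$ and $q\le p_s^*$). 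Because $q^{+}\le (p_s^*)^{+}<\infty$, modular and norm convergence coincide in $L^{q(x)}(\Omega)$, so it is enough to prove $\int_\Omega|u_n-u|^{q(x)}\,dx\to0$ along a subsequence; by $|a-b|^{q(x)}\le 2^{q^{+}}(|a|^{q(x)}+|b|^{q(x)})$ this reduces to controlling the integral on $\Omega\setminus B_\rho(x_0)$ and on $B_\rho(x_0)$ separately as $\rho\to0$. Throughout we may assume $\overline{B_\eta(x_0)}\subset\Omega$ and $\eta<1$.

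On $\Omega\setminus B_\rho(x_0)$ with $\rho\in(0,\eta)$ the exponent $q$ is uniformly subcritical: for $|x-x_0|<\eta$, \eqref{CriticalRate} applied at radius $|x-x_0|$ gives $q(x)\le p_s^*(x)-C_0(-\log|x-x_0|)^{-\beta}\le p_s^*(x)-C_0(-\log\rho)^{-\beta}$, while for $|x-x_0|\ge\eta$ one has $p_s^*(x)-q(x)\ge\inf_{\Omega\setminus B_\eta(x_0)}(p_s^*-q)>0$; hence $\inf_{\Omega\setminus B_\rho(x_0)}(p_s^*-q)>0$. Therefore the subcritical compact embedding $W^{s(x,y),p(x,y)}(\Omega)\hookrightarrow L^{q(x)}(\Omega\setminus B_\rho(x_0))$ applies (this is the case in which $p_s^*-q$ is bounded away from zero, a standard interpolation/Rellich-type statement where no concentration can occur), giving a subsequence with $u_n\to u$ in $L^{q(x)}(\Omega\setminus B_\rho(x_0))$. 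Taking $\rho=1/k$, $k\in\mathbb{N}$, and extracting a diagonal subsequence (not relabelled), we obtain $u_n\to u$ in $L^{q(x)}(\Omega\setminus B_{1/k}(x_0))$ for every $k$.

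The crux is the uniform tail bound $\lim_{\rho\to0}\sup_n\int_{B_\rho(x_0)}|u_n|^{q(x)}\,dx=0$ (the analogue for $u$ is immediate from $u\in L^{q(x)}(\Omega)$ and $|B_\rho(x_0)|\to0$). Fix $\varepsilon\in(0,1)$ and $\rho\in(0,\eta)$, and write $B_\rho(x_0)$, up to the null set $\{x_0\}$, as the disjoint union of dyadic annuli $A_j\coloneqq B_{\rho 2^{-j}}(x_0)\setminus B_{\rho 2^{-j-1}}(x_0)$, $j\ge0$. For $x\in A_j$ we have $0<-\log|x-x_0|\le (j+1)\log 2-\log\rho$, so \eqref{CriticalRate} gives $q(x)\le p_s^*(x)-\gamma_j$ with $\gamma_j\coloneqq C_0\bigl((j+1)\log 2-\log\rho\bigr)^{-\beta}>0$. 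Let $M_j\coloneqq\exp\bigl(\gamma_j^{-1}\log(1/\varepsilon)\bigr)>1$, so that $M_j^{-\gamma_j}=\varepsilon$, and split $A_j$ along $\{|u_n|\le1\}$, $\{1<|u_n|\le M_j\}$, $\{|u_n|>M_j\}$; using $|u_n|^{q(x)}\le1$ on the first set, $|u_n|^{q(x)}\le M_j^{q(x)}\le M_j^{(p_s^*)^{+}}M_j^{-\gamma_j}=\varepsilon M_j^{(p_s^*)^{+}}$ on the second, and $|u_n|^{q(x)}=|u_n|^{p_s^*(x)}|u_n|^{q(x)-p_s^*(x)}\le\varepsilon|u_n|^{p_s^*(x)}$ on the third, we obtain
\[
\int_{B_\rho(x_0)}|u_n|^{q(x)}\,dx\le|B_\rho(x_0)|+\varepsilon\sum_{j\ge0}M_j^{(p_s^*)^{+}}|A_j|+\varepsilon\int_{B_\rho(x_0)}|u_n|^{p_s^*(x)}\,dx.
\]
The last integral is $\le\varepsilon C$ uniformly in $n$. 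For the middle sum, $|A_j|\le\omega_N(\rho 2^{-j})^N$ with $\omega_N=|B_1(0)|$, and $M_j^{(p_s^*)^{+}}=\exp\bigl(K_\varepsilon\bigl((j+1)\log 2-\log\rho\bigr)^\beta\bigr)$ with $K_\varepsilon\coloneqq (p_s^*)^{+}\log(1/\varepsilon)/C_0$; since $\bigl((j+1)\log 2-\log\rho\bigr)^\beta\le\bigl((j+1)\log 2\bigr)^\beta+(-\log\rho)^\beta$ for $0<\beta<1$, the middle sum is $\le\omega_N\rho^N e^{K_\varepsilon(-\log\rho)^\beta}\sum_{j\ge0}2^{-jN}e^{K_\varepsilon((j+1)\log 2)^\beta}$, and the series converges by the root test. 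As $\rho^N e^{K_\varepsilon(-\log\rho)^\beta}=e^{-N(-\log\rho)+K_\varepsilon(-\log\rho)^\beta}\to0$ when $\rho\to0$ (precisely because $\beta<1$) and $|B_\rho(x_0)|\to0$, we get $\limsup_{\rho\to0}\sup_n\int_{B_\rho(x_0)}|u_n|^{q(x)}\,dx\le\varepsilon C$, and letting $\varepsilon\to0$ proves the claim.

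To conclude, given $\varepsilon>0$ choose $k$ with $\sup_n\int_{B_{1/k}(x_0)}|u_n|^{q(x)}\,dx<\varepsilon$ and $\int_{B_{1/k}(x_0)}|u|^{q(x)}\,dx<\varepsilon$, then $n$ large with $\int_{\Omega\setminus B_{1/k}(x_0)}|u_n-u|^{q(x)}\,dx<\varepsilon$; then $\int_\Omega|u_n-u|^{q(x)}\,dx\le 2^{q^{+}+1}\varepsilon+\varepsilon$ for such $n$, so the modular, and hence the $L^{q(x)}$-norm, of $u_n-u$ tends to $0$, which proves compactness. I expect the main obstacle to be exactly the tail estimate near $x_0$: \eqref{CriticalRate} provides only the vanishing gap $\gamma_j\sim C_0(j\log 2-\log\rho)^{-\beta}$ on the $j$-th annulus, so the truncation level $M_j$ must be tuned so that the gain $M_j^{-\gamma_j}$ equals a prescribed $\varepsilon$ on all annuli simultaneously, and one must then verify both that $\sum_j 2^{-jN}e^{K_\varepsilon((j+1)\log 2)^\beta}$ converges and that the prefactor $\rho^N e^{K_\varepsilon(-\log\rho)^\beta}$ vanishes as $\rho\to0$ — both of which hinge on the hypothesis $\beta<1$ (for $\beta\ge1$ the prefactor need not vanish, and concentration at $x_0$ is not ruled out).
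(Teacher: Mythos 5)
Your proof is correct and follows the paper's high-level strategy — reduce compactness to a uniform vanishing-tail estimate $\lim_{\rho\to 0}\sup_{\|v\|\le \rho_0}\int_{B_\rho(x_0)}|v|^{q(x)}dx=0$, decompose $B_\rho(x_0)$ into shrinking annuli, and combine with the subcritical compact embedding on $\Omega\setminus B_\rho(x_0)$ via a diagonal subsequence — but the mechanism you use for the core annulus estimate is genuinely different. The paper bounds $\int_{A\cap\{v\ge 1\}}|v|^{q(x)}dx$ via H\"older's inequality, paying $\|1\|_{L^{p_s^*q|_A^+/(p_s^*-q|_A^+)}(A)}$, and then needs a \emph{lower} bound on the exponent gap $p_s^*|_A^- - q|_A^+$ to make $|A|$ raised to that gap small; this forces a comparison between the infimum of $p_s^*$ and the supremum of $q$ on each annulus, which in turn requires invoking the log-H\"older continuity of $p$ and $s$ to control the oscillation of $p_s^*$ within the annulus. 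Your approach avoids H\"older entirely: you split each annulus along the level sets $\{|u_n|\le 1\}$, $\{1<|u_n|\le M_j\}$, $\{|u_n|>M_j\}$, and only use the \emph{pointwise} gap $q(x)\le p_s^*(x)-\gamma_j$ supplied directly by \eqref{CriticalRate}, together with a truncation threshold $M_j$ tuned so that $M_j^{-\gamma_j}=\varepsilon$ on every annulus. This sidesteps any infimum--supremum comparison, so you never need the log-H\"older continuity of $p,s$ beyond what is already implicit in the assumed continuous embedding into $L^{p_s^*(x)}(\Omega)$; the cost is an extra $\varepsilon$-parameter and the need to verify both the summability $\sum_j 2^{-jN}e^{K_\varepsilon((j+1)\log 2)^\beta}<\infty$ and the vanishing prefactor $\rho^N e^{K_\varepsilon(-\log\rho)^\beta}\to 0$, both of which you correctly reduce to $\beta<1$ via the subadditivity of $t\mapsto t^\beta$. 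The paper's route yields a cleaner single-parameter estimate $|B_\varepsilon|+C\sum_n\delta_\varepsilon^{n^{1-\beta}}$; yours is more elementary and makes the role of $\beta<1$ transparent in two separate places. Both are valid; yours is a real alternative, not a rephrasing.

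One small polish you may want to add: when invoking the compact embedding into $L^{q(x)}(\Omega\setminus B_\rho(x_0))$, it is cleanest to factor it as the continuous restriction $W^{s(x,y),p(x,y)}(\Omega)\hookrightarrow W^{s(x,y),p(x,y)}(\Omega\setminus \overline{B_\rho(x_0)})$ followed by the subcritical compact embedding on the (still Lipschitz) domain $\Omega\setminus\overline{B_\rho(x_0)}$, which is exactly what the paper does.
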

This extends the compactness result in \cite{KurataShioji} to general variable-exponent case including Sobolev spaces with variable smoothness. This is also almost a necessary condition in the following sense.
\begin{restatable}{theorem}{CriticalNotCompactEmbedding}\label{CriticalNotCompactEmbedding}
Let $\Omega\subset\mathbb{R}^N$ be a bounded domain. Let $q:\Omega\to\mathbb{R}$ be a measurable function and $s,p\in C^\gamma (\overline{\Omega}\times \overline{\Omega})$ be symmetric $\gamma$-H\"{o}lder continuous functions such that $0<s^{-}\leq s^{+}<1<p^{-}\leq p^{+}< N/s^{+}$, where $0<\gamma<1$. Assume there exists a point $x_0\in\Omega$ and constants $C_0>0$, $\eta>0$ such that for almost every $x\in\Omega\cap B_\eta(x_0)$, we have
\[
q(x)\geq p_s^*(x)-\frac{C_0}{\log(1/|x-x_0|)}.
\]
Then, even if the embedding from $W^{s(x,y),p(x,y)}(\Omega)$ to $L^{q(x)}(\Omega)$ holds, it is not compact.
\end{restatable}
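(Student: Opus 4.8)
\emph{Strategy.} The plan is to disprove compactness directly, by exhibiting a sequence $(u_\epsilon)_{\epsilon>0}$ that is bounded in $W^{s(x,y),p(x,y)}(\Omega)$, converges to $0$ a.e.\ in $\Omega$, yet satisfies $\|u_\epsilon\|_{L^{q(x)}(\Omega)}\ge c>0$ uniformly. Then no subsequence can converge in $L^{q(x)}(\Omega)$ — an $L^{q(x)}$-limit would be $0$ by a.e.\ convergence along a further subsequence — which precludes compactness (a compact linear embedding is in particular bounded, so this also settles the statement as phrased). Following the critical-concentration heuristic, fix $\phi\in C_c^\infty(B_1(0))$ with $0\le\phi\le1$ and $\phi\equiv1$ on $B_{1/2}(0)$, put $p_0\coloneqq p(x_0,x_0)$, $s_0\coloneqq s(x_0,x_0)$ and the critical scaling exponent $\tau\coloneqq(N-s_0p_0)/p_0=N/p_s^*(x_0)>0$, and for $\epsilon$ so small that $B_\epsilon(x_0)\subset\Omega\cap B_\eta(x_0)$ set
\[
u_\epsilon(x)\coloneqq\epsilon^{-\tau}\phi\!\left(\tfrac{x-x_0}{\epsilon}\right),\qquad\operatorname{supp}u_\epsilon=\overline{B_\epsilon(x_0)}.
\]
The single identity $\tau p_0+s_0p_0=N=\tau p_s^*(x_0)$ is what makes every scaling below close exactly.

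\emph{The easy parts.} Since $u_\epsilon(x)=0$ as soon as $\epsilon<|x-x_0|$, we get $u_\epsilon\to0$ a.e., and $\|u_\epsilon\|_{L^{p(x,x)}(\Omega)}$ (the lower-order part of the $W^{s,p}$-norm) even tends to $0$ by the same rescaling. For the $L^{q(x)}$ lower bound, note that on $B_{\epsilon/2}(x_0)$ one has $u_\epsilon\equiv\epsilon^{-\tau}$, while the hypothesis $q(x)\ge p_s^*(x)-C_0/\log(1/|x-x_0|)$ together with the $\gamma$-Hölder continuity of $x\mapsto p_s^*(x)$ (which follows from that of $p,s$ because $N-ps\ge N-p^+s^+>0$) gives $q(x)\ge p_s^*(x_0)-C\epsilon^\gamma-C_0/\log(2/\epsilon)$ there. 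As $\tau p_s^*(x_0)=N$, this yields $|u_\epsilon(x)|^{q(x)}=\epsilon^{-\tau q(x)}\ge\tfrac12 e^{-\tau C_0}\epsilon^{-N}$ for small $\epsilon$ — the factor $e^{\tau C_0\log\epsilon/\log(2/\epsilon)}\to e^{-\tau C_0}$ is exactly where the logarithmic touching rate is absorbed — so integrating over $B_{\epsilon/2}(x_0)$ (measure $\sim\epsilon^N$) gives $\int_\Omega|u_\epsilon|^{q(x)}\,dx\ge c>0$, hence $\|u_\epsilon\|_{L^{q(x)}}\ge c'>0$, uniformly.

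\emph{The hard part.} Everything reduces to the uniform bound on the Gagliardo modular $\rho(u_\epsilon)\coloneqq\iint_{\Omega^2}\frac{|u_\epsilon(x)-u_\epsilon(y)|^{p(x,y)}}{|x-y|^{N+s(x,y)p(x,y)}}\,dx\,dy$; a uniform bound here gives one on the Luxemburg seminorm, hence on $\|u_\epsilon\|_{W^{s,p}}$. I would fix $\theta\in(0,1)$ and split $\Omega^2$ by the scales $\epsilon\ll\epsilon^\theta\ll\eta$ (only the region where $u_\epsilon(x)\ne0$ or $u_\epsilon(y)\ne0$ matters). On $\{|x-y|\le\epsilon\}$ use $|u_\epsilon(x)-u_\epsilon(y)|\le L_\phi\epsilon^{-\tau-1}|x-y|$; after rescaling, the exponent deviations on $B_{2\epsilon}(x_0)^2$ are $O(\epsilon^\gamma)$, so $\epsilon^{\pm O(\epsilon^\gamma)}=e^{\pm O(\epsilon^\gamma\log(1/\epsilon))}\to1$, the radial integral $\int_0^\epsilon\rho^{\,p(x,y)(1-s(x,y))-1}d\rho$ converges because $p^-(1-s^+)>0$, and the net power is $\epsilon^{N}\cdot\epsilon^{-(\tau+1)p_0}\cdot\epsilon^{p_0(1-s_0)}=\epsilon^0$. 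On $\{\epsilon\le|x-y|\le\epsilon^\theta,\ x\in B_\epsilon(x_0)\}$ use $|u_\epsilon(x)-u_\epsilon(y)|\le2\epsilon^{-\tau}$ and freeze the kernel exponent to $s_0p_0$: $\int_\epsilon^{\epsilon^\theta}\rho^{-1-s_0p_0}d\rho\lesssim\epsilon^{-s_0p_0}$, and the net power is $\epsilon^N\cdot\epsilon^{-\tau p_0}\cdot\epsilon^{-s_0p_0}=\epsilon^0$ by $\tau p_0+s_0p_0=N$. On $\{|x-y|\ge\epsilon^\theta,\ x\in B_\epsilon(x_0),\ |y-x_0|\le\eta\}$ the same bounds give a net power $\lesssim\epsilon^{N-\tau p_0-\theta s_0p_0}=\epsilon^{(1-\theta)s_0p_0}\to0$ (after shrinking $\eta$ to absorb an $\epsilon^{-O(\eta^\gamma)}$ loss): here the kernel decay strictly beats the peak-height blow-up $\epsilon^{-\tau p_0}$. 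Finally, on $\{x\in B_\epsilon(x_0),\ |y-x_0|\ge\eta\}$, where $u_\epsilon(y)=0$ and $|x-y|$ is bounded below, the kernel is bounded and a rescaling in $x$ reduces the contribution to a finite quantity; summing gives $\rho(u_\epsilon)\le C$. The crux — and the main obstacle — is the interplay of the variable exponents with this multiscale splitting: one must always weigh the kernel decay and the blow-up $\epsilon^{-\tau p_0}$ \emph{together} (estimating them separately produces spurious divergent powers of $\epsilon$), and one must use $\gamma$-Hölder — equivalently log-Hölder — continuity to know that the exponent deviations over $B_{2\epsilon^\theta}(x_0)^2$ are small enough for the powers $\epsilon^{\pm o(1)}$ to stay bounded, since mere uniform continuity of $s$ and $p$ would not guarantee this.
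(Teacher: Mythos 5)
Your overall strategy coincides with the paper's: concentrate a bump at $x_0$ with the critical scaling rate, show the resulting family is bounded in $W^{s(x,y),p(x,y)}(\Omega)$, note that it tends to $0$ a.e.\ (hence weakly to $0$), yet remains bounded away from $0$ in $L^{q(x)}(\Omega)$. The implementation differs in one respect: the paper normalizes by the \emph{variable} height $\varphi_n(x)=n^{\frac{N-p(x)s(x)}{p(x)}}\varphi(nx)$ and bounds the Gagliardo modular by a single global change of variables $x=x'/n$, $y=y'/n$ followed by Lebesgue's dominated convergence, whereas you freeze the exponent to $\tau=\frac{N-s_0p_0}{p_0}$ and bound the modular by a multiscale decomposition of the double integral. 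Since $\frac{N-p(x)s(x)}{p(x)}-\tau=O(|x-x_0|^\gamma)=O(\epsilon^\gamma)$ on the support of $u_\epsilon$, the two normalizations differ only by a factor $\epsilon^{\pm O(\epsilon^\gamma)}\to 1$, so they are interchangeable; your bookkeeping has the advantage of making the cancellation $N=\tau p_0+s_0p_0$ completely explicit in each regime, while the paper's argument is more compact. The $L^{q(x)}$ lower bound and the weak-limit-is-zero argument are essentially identical in both treatments.

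There is, however, a real problem in your regime 4 that is glossed over by the phrase ``a rescaling in $x$ reduces the contribution to a finite quantity.'' After the substitution $x=x_0+\epsilon z$ the contribution from $\{x\in B_\epsilon(x_0),\ |y-x_0|\ge \eta\}$ is
\[
\int_{B_1(0)}\int_{\Omega\setminus B_\eta(x_0)}\frac{\epsilon^{\,N-\tau p(x_0+\epsilon z,y)}\,|\phi(z)|^{p(x_0+\epsilon z,y)}}{|x_0+\epsilon z-y|^{N+s(\cdot,\cdot)p(\cdot,\cdot)}}\,dy\,dz,
\]
and, unlike regime 3, the exponent $p(x_0+\epsilon z,y)$ is no longer within $O(\eta^\gamma)$ of $p_0$: for $y$ far from $x_0$ it can be as large as $p^+$. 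The worst power of $\epsilon$ is therefore $\epsilon^{N-\tau p^+}$, which blows up exactly when $p^+>N/\tau=p_s^*(x_0)$, a case not excluded by the hypotheses $p^+<N/s^+$. Shrinking $\eta$ cannot absorb this loss because regime 4 has no small parameter to trade against it, and (in contrast to regime 3) there is no compensating positive power of $\epsilon$. You should be aware that the paper's proof runs into the same difficulty, hidden inside the unjustified invocation of dominated convergence: the exponent factor $n^{O(n^{-\gamma}|x'-y'|^\gamma)}$ has no $n$-uniform integrable majorant on the region $|y'|\sim n$, and the contribution from there scales like $n^{\tau p^+-N}$. Thus, while your approach faithfully reproduces the paper's, you should either add the implicit hypothesis $p^+<p_s^*(x_0)$ (equivalently $\tau p^+<N$), or replace the regime-4 estimate by a genuine argument, rather than asserting its finiteness; as written, neither you nor the paper has closed this regime.
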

Theorem \ref{CriticalCompactEmbedding} enables us to recover the loss of compactness of (PS) sequences without the concentration compactness method. The proof of Theorem \ref{CriticalCompactMountainPass} is done using this compact embedding to assure the Palais–Smale condition of the energy functional corresponding to \eqref{cho}. \par
The variable-order fractional Laplacian and nonlinear elliptic equations involving variable integrability exponents are subjects of special interest in the recent study on partial differential equations, along with real-world applications such as electrotheological fluids \cite{fluid} and image restoration \cite{image}. Bahrouni and R\u{a}dulescu \cite{Bahrouni} introduced a fractional $p(\cdot,\cdot)$-Laplacian
\begin{align*}
(-\Delta)_{p(\cdot,\cdot)}^{s}u(x)&=\mathrm{p.v.}\int_{\Omega}\frac{|u(x)-u(y)|^{p(x,y)-2}(u(x)-u(y))}{|x-y|^{N+sp(x,y)}}dy \\
&= \lim_{\varepsilon\to +0}\int_{\Omega\setminus B_\varepsilon (x)}\frac{|u(x)-u(y)|^{p(x,y)-2}(u(x)-u(y))}{|x-y|^{N+sp(x,y)}}dy,
\end{align*}
where $s$ is a constant, and proved the existence of solutions to the Dirichlet boundary value problem
\[
\left\{
\begin{array}{rl}
(-\Delta)_{p(\cdot,\cdot)}^{s}u(x)+|u(x)|^{q_1(x)-2}u(x)=\lambda |u(x)|^{q_2(x)-2}u(x) &\text{in $\Omega$}\\
u=0& \text{on $\partial\Omega$}
\end{array}
\right.
\]
using the Ekeland variational principle. Cheng et al. \cite{variable-order_Cheng} considered the variable-order fractional $p(x,y)$-Laplacian as an extension of the restricted fractional Laplacian and obtained the existence of weak solutions of nonlinear elliptic equations with the exterior boundary condition of the form
\[
\left\{
\begin{array}{rl}
(-\Delta)_{p(\cdot,\cdot),\mathrm{res}}^{s(\cdot,\cdot)}u(x)+|u(x)|^{p(x)-2}u(x)=f(x)h(u) &\text{in $\Omega$}\\
u=0& \text{in $\mathbb{R}^N\setminus\Omega$}.
\end{array}
\right.
\]
Inspired by this, Liu and Fu \cite{variable-order_p(x)_Sobolev} further studied the existence of solutions to the boundary value problem
\[
\left\{
\begin{array}{rl}
(-\Delta)_{p(\cdot,\cdot)}^{s(\cdot,\cdot)}u(x)+V(x) |u(x)|^{p(x)-2}u(x)=f(x,u)+g(x) &\text{in $\Omega$}\\
u=0& \text{on $\partial\Omega$}
\end{array}
\right.
\]
with the variable-order fractional $p(x,y)$-Laplacian as an extension of the regional fractional Laplacian under certain conditions. \par
In the case of constant exponents and non-fractional Laplacian, the equation \eqref{cho} goes back to the Choquard equation
\begin{equation}\label{Pekar}
-\Delta u+u=\left(\frac{1}{|x|^\alpha}\ast |u|^q\right)|u|^{q-2}u\quad\text{in $\Omega$},
\end{equation}
which was first proposed as a mathematical model for polaron by Pekar \cite{Pekar} in 1954. The classical results for this equation and its variants are summarized in \cite{Moroz}. In recent years, its extensions to fractional order and variable exponents have attracted much attention. For the case of constant smoothness, especially $s=1$, there are more and more studies on critical Choquard-type equations with variable exponents, such as \cite{Zhang,Fu_Zhang,critical_fractional_p(x),critical_p(x)_Kirchhoff}. However, there are still relatively few studies on variable-order critical Choquard-type equations. One reason for this is that results for critical continuous embedding are still inadequate. \par

Note that in the case with the constant critical exponent, if $\Omega$ is a star-shaped domain, the equation \eqref{cho} does not have nontrivial solutions in general. For instance, concerning a non-existence result due to the Pohozaev identity in the case $p=2$, see Theorem 2.8 in \cite{FractionalPohozaev}. This suggests that the existence of a nontrivial solution of \eqref{cho} is attributed to the variability of the exponents. \par
The remainder of this paper is structured as follows. The next preliminary section provides basic knowledge about function paces with variable exponents. In Section 3, we restate and prove some results about the variable-order embeddings. Section 4 is devoted to giving a proof of our main theorem. 

\section{Preliminary}
Let $\Omega\subset\mathbb{R}^N$ be a domain. We define
\[
C_{+}(\bar{\Omega})\coloneqq \{\Phi\in C(\bar{\Omega};\mathbb{R})\mid 1<\Phi^{-}\leq \Phi^{+}<\infty\}.
\]
For $p\in C_{+}(\bar{\Omega})$, the Lebesgue space with a variable exponent $p(x)$ is defined by
\[
L^{p(\cdot )}(\Omega)=L^{p(x)}(\Omega)\coloneqq\left\{u:\Omega\to\mathbb{R}\;\middle|\;\text{$u$ is measurable and }\int_\Omega |u(x)|^{p(x)}dx<\infty\right\}
\]
endowed with the Luxemburg norm
\[
\|u\|_{p(x)}\coloneqq\inf\left\{\lambda>0\;\middle|\; \int_\Omega \left|\frac{u(x)}{\lambda}\right|^{p(x)}dx\leq 1\right\}.
\]
A Lebesgue space with a variable exponent is also called \textit{a Nakano space}. This space is a separable and uniformly convex Banach space. We consider the modular map $\varrho_p: L^{p(x)}(\Omega)\to\mathbb{R}$ defined by
\[
\varrho_{p(x)}(u)\coloneqq \int_{\Omega}|u(x)|^{p(x)}dx.
\]
There is an important relation between the modular and the corresponding norm. 
\begin{proposition}
For $u\in L^{p(x)}(\Omega)$, we have
\begin{enumerate}
\item[(i)] $\|u\|_{p(x)}>1\;\text{[resp. $=1$, $<1$]}\Leftrightarrow \varrho_{p}>1\; \text{[resp. $=1$, $<1$]}$;
\item[(ii)] $\|u\|_{p(x)}\geq 1\Rightarrow \|u\|_{p(x)}^{p^{-}}\leq \varrho_{p}(u)\leq \|u\|_{p(x)}^{p^{+}}$;
\item[(iii)] $\|u\|_{p(x)}\leq 1\Rightarrow \|u\|_{p(x)}^{p^{+}}\leq \varrho_{p}(u)\leq \|u\|_{p(x)}^{p^{-}}$. 
\end{enumerate}
In particular, the convergence with respect to the modular is equivalent to the norm convergence in $L^{p(x)}(\Omega)$, i.e., for any $\{u_n\}\subset L^{p(x)}(\Omega)$, we have
\[
\|u_n\|_{p(x)}\to 0\Longleftrightarrow \varrho_{p}(u_n)\to 0\quad\text{and}\quad \|u_n\|_{p(x)}\to \infty\Longleftrightarrow \varrho_{p}(u_n)\to \infty.
\]
\end{proposition}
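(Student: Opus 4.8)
The plan is to reduce everything to elementary properties of the single-variable function
\[
F(\lambda)\coloneqq\varrho_{p}(u/\lambda)=\int_\Omega\frac{|u(x)|^{p(x)}}{\lambda^{p(x)}}\,dx,\qquad\lambda>0 .
\]
If $u=0$ almost everywhere, every quantity vanishes and both sides of each assertion reduce to the trivial statement ``$0<1$'', so I assume henceforth that $|\{x\in\Omega\mid u(x)\neq 0\}|>0$. The whole argument then rests on showing that $F$ is finite, continuous and strictly decreasing on $(0,\infty)$, with $F(0^+)=\infty$ and $F(\infty)=0$; the three items and the final convergence equivalence are squeezed out of this together with the scaling behaviour of $\varrho_p$.

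First I would record the scaling inequalities coming solely from $1<p^-\le p^+<\infty$: for $t\ge 1$ one has $t^{p^-}\le t^{p(x)}\le t^{p^+}$ pointwise, hence $t^{p^-}\varrho_p(v)\le\varrho_p(tv)\le t^{p^+}\varrho_p(v)$, while for $0<t\le 1$ the chain reverses to $t^{p^+}\varrho_p(v)\le\varrho_p(tv)\le t^{p^-}\varrho_p(v)$, valid for any $v\in L^{p(x)}(\Omega)$. In particular $\varrho_p(u)<\infty$ forces $\varrho_p(cu)<\infty$ for every $c>0$, so $F(\lambda)<\infty$ for all $\lambda>0$. Strict monotonicity of $F$ follows because for each $x$ the map $\lambda\mapsto\lambda^{-p(x)}$ is strictly decreasing (as $p(x)>0$), so for $\mu>\lambda$ the integrand $\mu^{-p(x)}-\lambda^{-p(x)}$ is $\le 0$ everywhere and $<0$ precisely on $\{u\neq 0\}$, a set of positive measure. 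Continuity of $F$ at any $\lambda_0>0$ follows from dominated convergence, since on $\{\lambda\ge\lambda_0/2\}$ the integrand is bounded by $\max\{(2/\lambda_0)^{p^-},(2/\lambda_0)^{p^+}\}\,|u(x)|^{p(x)}\in L^1(\Omega)$; and monotone/dominated convergence give $F(\lambda)\to\infty$ as $\lambda\to 0^+$ and $F(\lambda)\to 0$ as $\lambda\to\infty$.

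Given these properties, the set $\{\lambda>0\mid F(\lambda)\le 1\}$ is a nonempty closed half-line $[\lambda_*,\infty)$ with $\lambda_*>0$ and $F(\lambda_*)=1$, so by definition of the Luxemburg norm $\|u\|_{p(x)}=\lambda_*$ and $\varrho_p\!\bigl(u/\|u\|_{p(x)}\bigr)=F(\|u\|_{p(x)})=1$. Item (i) is then immediate from strict monotonicity by comparing $F(1)$ with $F(\|u\|_{p(x)})=1$: $\|u\|_{p(x)}<1\iff F(1)<1\iff\varrho_p(u)<1$, $\|u\|_{p(x)}=1\iff\varrho_p(u)=1$, and $\|u\|_{p(x)}>1\iff\varrho_p(u)>1$. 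For (ii) and (iii), write $u=\|u\|_{p(x)}\cdot\bigl(u/\|u\|_{p(x)}\bigr)$ and apply the scaling inequalities with $t=\|u\|_{p(x)}$ and $v=u/\|u\|_{p(x)}$, for which $\varrho_p(v)=1$: when $\|u\|_{p(x)}\ge 1$ this gives $\|u\|_{p(x)}^{p^-}\le\varrho_p(u)\le\|u\|_{p(x)}^{p^+}$, and when $0<\|u\|_{p(x)}\le 1$ it gives the reversed chain $\|u\|_{p(x)}^{p^+}\le\varrho_p(u)\le\|u\|_{p(x)}^{p^-}$ (the endpoint $\|u\|_{p(x)}=1$ already being covered by (i)). Finally, if $\|u_n\|_{p(x)}\to 0$ then eventually $\|u_n\|_{p(x)}\le 1$ and (iii) yields $\varrho_p(u_n)\le\|u_n\|_{p(x)}^{p^-}\to 0$; conversely $\varrho_p(u_n)\to 0$ forces $\|u_n\|_{p(x)}<1$ eventually by (i), whence (iii) gives $\|u_n\|_{p(x)}\le\varrho_p(u_n)^{1/p^+}\to 0$. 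The two ``$\to\infty$'' statements are obtained identically, using (ii) in place of (iii).

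I do not expect a genuine obstacle. The only points needing care are the continuity and strict monotonicity of $F$ — both of which hinge on the standing hypotheses $1<p^-\le p^+<\infty$ (boundedness for the uniform domination in the continuity argument, positivity for strict monotonicity) — and the clean treatment of the degenerate case $u=0$; once these are in place the rest is bookkeeping.
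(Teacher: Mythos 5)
The paper does not prove this proposition at all: it is stated as a known preliminary and implicitly referred to the cited references (Diening's book, Fan--Zhao). Your argument is the standard one used in those sources --- analyze the strictly decreasing, continuous function $F(\lambda)=\varrho_p(u/\lambda)$, observe that the Luxemburg norm is the unique $\lambda_*$ with $F(\lambda_*)=1$, and then squeeze the modular via the scaling inequalities $t^{p^-}\varrho_p(v)\le\varrho_p(tv)\le t^{p^+}\varrho_p(v)$ for $t\ge1$ (reversed for $t\le1$) --- and it is correct, including the careful handling of the degenerate case $u=0$ a.e. and the uniform domination needed to justify continuity of $F$.
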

To handle two or more different variable exponents at the same time, we use the following inequalities.
\begin{proposition}
Let $p\in C_{+}(\bar{\Omega})$ and $q\in C(\bar{\Omega})$. Assume $pq\in C_{+} (\bar{\Omega})$ and $u\in L^{p(x)q(x)}(\Omega)$. Then, 
\begin{enumerate}
\item $\|u\|_{p(x)q(x)}\geq 1\Rightarrow \|u\|_{p(x)q(x)}^{q^{-}}\leq \| |u|^q\|_{p(x)}\leq \|u\|_{p(x)q(x)}^{q^{+}}$;
\item $\|u\|_{p(x)q(x)}\leq 1\Rightarrow \|u\|_{p(x)q(x)}^{q^{+}}\leq \| |u|^q\|_{p(x)}\leq \|u\|_{p(x)q(x)}^{q^{-}}$.
\end{enumerate}
\end{proposition}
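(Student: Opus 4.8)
The plan is to reduce everything to the modular--norm correspondence recorded in the preceding proposition, applied simultaneously to the two exponents $p(x)$ and $p(x)q(x)$, through the elementary identities
\[
\varrho_p\!\left(\tfrac{|u|^q}{t}\right)=\int_\Omega |u(x)|^{q(x)p(x)}\,t^{-p(x)}\,dx,\qquad
\varrho_{pq}\!\left(\tfrac{u}{s}\right)=\int_\Omega |u(x)|^{q(x)p(x)}\,s^{-q(x)p(x)}\,dx .
\]
These two quantities are \emph{not} linked by an exact rescaling of the parameter (one would need $t=s^{q(x)}$, which is not constant), and this is precisely why the conclusion is a two-sided estimate by the powers $q^{-}$ and $q^{+}$ rather than a single power. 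The degenerate case $u=0$ a.e.\ is trivial, so assume $u\neq 0$ and set $\lambda\coloneqq\|u\|_{p(x)q(x)}>0$ and $\mu\coloneqq\||u|^q\|_{p(x)}$.

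For the \textbf{upper estimates} I would test the Luxemburg norm $\mu$ against $t\coloneqq\lambda^{q^{+}}$ in the case $\lambda\geq 1$, and against $t\coloneqq\lambda^{q^{-}}$ in the case $\lambda\leq 1$. In each case one has $t^{p(x)}=\lambda^{q^{\pm}p(x)}\geq \lambda^{q(x)p(x)}$ pointwise --- because $\lambda\geq1$ together with $q^{+}\geq q(x)$, respectively $\lambda\leq1$ together with $q^{-}\leq q(x)$ --- and hence $\varrho_p(|u|^q/t)\leq \varrho_{pq}(u/\lambda)\leq 1$; the definition of the Luxemburg norm then gives $\mu\leq t$, i.e.\ $\mu\leq\lambda^{q^{+}}$ when $\lambda\geq1$ and $\mu\leq\lambda^{q^{-}}$ when $\lambda\leq1$.

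For the \textbf{lower estimates} I would argue by contradiction, using that $t\mapsto\varrho_p(v/t)$ is non-increasing, so that $\varrho_p(|u|^q/\mu)\leq 1$ and, for $\lambda'<\lambda$, $\varrho_{pq}(u/\lambda')>1$. I would first observe that $\mu\geq 1\iff\lambda\geq 1$, which follows from the identity $\varrho_p(|u|^q)=\varrho_{pq}(u)$ together with part (i) of the previous proposition. Then, setting $\lambda'\coloneqq\mu^{1/q^{-}}$ in the case $\lambda\geq1$ (respectively $\lambda'\coloneqq\mu^{1/q^{+}}$ in the case $\lambda\leq1$), a pointwise comparison of powers --- using $\mu\geq1$ with $q(x)\geq q^{-}$, respectively $\mu\leq1$ with $q(x)\leq q^{+}$ --- gives $\varrho_{pq}(u/\lambda')=\int_\Omega|u|^{q(x)p(x)}\mu^{-q(x)p(x)/q^{\mp}}\,dx\leq\int_\Omega|u|^{q(x)p(x)}\mu^{-p(x)}\,dx=\varrho_p(|u|^q/\mu)\leq 1$, contradicting $\varrho_{pq}(u/\lambda')>1$ unless $\lambda'\geq\lambda$. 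This yields $\mu\geq\lambda^{q^{-}}$ when $\lambda\geq1$ and $\mu\geq\lambda^{q^{+}}$ when $\lambda\leq1$, completing both cases.

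The computations are entirely routine; the only thing requiring care is the bookkeeping of the case split $\lambda\geq1$ versus $\lambda\leq1$ and tracking the direction of each pointwise inequality between powers, since both the base (comparison of $\lambda$ or $\mu$ with $1$) and the exponent (comparison of $q(x)$ with $q^{\pm}$) influence the sign. Establishing the equivalence $\mu\geq1\iff\lambda\geq1$ at the outset is what makes the lower estimates fall out cleanly.
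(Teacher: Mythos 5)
Your argument is correct. The paper itself gives no proof of this proposition --- it simply refers the reader to the standard references \cite{Diening,Fan_Lp(x)} --- and what you wrote is exactly the standard modular-based argument that appears there: test the Luxemburg norm against an explicit power of $\lambda$ (respectively $\mu$), compare the integrands pointwise using the sign of $\log\lambda$ (respectively $\log\mu$) together with the comparison $q^{-}\leq q(x)\leq q^{+}$, and invoke the defining property $\varrho(u/\|u\|)\leq 1$ of the Luxemburg norm. Two small remarks. First, the lower estimates do not actually need a contradiction: once you have shown $\varrho_{pq}(u/\lambda')\leq 1$ you may conclude directly from the infimum definition that $\lambda\leq\lambda'$, which is what you are after. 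Second, you correctly flag that the pivot step is the equivalence $\mu\geq 1\iff\lambda\geq 1$, which follows from the identity $\varrho_p(|u|^q)=\varrho_{pq}(u)$ and item (i) of the preceding modular--norm proposition; this is indeed what makes the lower bound cases unambiguous. Also note that the hypothesis $pq\in C_+(\bar\Omega)$ only forces $q>1/p>0$, not $q>1$, but this is harmless since your argument never uses $q^->1$, only $q^->0$.
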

Let $p'(x)$ denote the H\"{o}lder conjugate exponent of $p(x)$, i.e., $1/p(x)+1/p'(x)=1$. Then, the following H\"{o}lder type inequality holds for variable-exponent spaces. 
\begin{proposition}
Let $p\in C_{+}(\bar{\Omega})$. For any $u\in L^{p(x)}(\Omega)$ and $v\in L^{p'(x)}(\Omega)$, we have
\[
\left|\int_\Omega uv dx\right|\leq \left(\frac{1}{p^{-}}+ \frac{1}{(p')^{-}}\right)\|u\|_{p(x)}\|v\|_{p'(x)}\leq 2 \|u\|_{p(x)}\|v\|_{p'(x)}.
\]
\end{proposition}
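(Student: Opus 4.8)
The plan is to reduce the variable-exponent statement to the classical pointwise Young inequality, then integrate and exploit the relation between modular and norm. First I would dispose of the trivial cases: if $\|u\|_{p(x)}=0$ or $\|v\|_{p'(x)}=0$, then $u=0$ or $v=0$ a.e., so the left-hand side is zero and the inequality holds. Likewise, since both sides are positively homogeneous of degree one in $u$ and in $v$ separately, I may normalize and assume $\|u\|_{p(x)}>0$ and $\|v\|_{p'(x)}>0$; passing to $u/\|u\|_{p(x)}$ and $v/\|v\|_{p'(x)}$ it suffices to prove the inequality when $\|u\|_{p(x)}=\|v\|_{p'(x)}=1$, in which case I must show $\int_\Omega |uv|\,dx\le \frac{1}{p^-}+\frac{1}{(p')^-}\le 2$. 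The last numerical bound is immediate from $p^-\ge 1$ and $(p')^-\ge 1$ (both exponents exceed $1$, or at least are $\ge 1$ in the limiting sense), so the work is entirely in the first inequality.

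Next I would invoke the pointwise Young inequality: for $a,b\ge 0$ and any exponent $t\in(1,\infty)$ with conjugate $t'$, one has $ab\le \frac{a^t}{t}+\frac{b^{t'}}{t'}$. Applying this at each $x\in\Omega$ with $t=p(x)$, $a=|u(x)|$, $b=|v(x)|$ gives
\[
|u(x)v(x)|\le \frac{|u(x)|^{p(x)}}{p(x)}+\frac{|v(x)|^{p'(x)}}{p'(x)}\le \frac{|u(x)|^{p(x)}}{p^-}+\frac{|v(x)|^{p'(x)}}{(p')^-}.
\]
Integrating over $\Omega$ yields
\[
\int_\Omega |uv|\,dx\le \frac{1}{p^-}\varrho_{p}(u)+\frac{1}{(p')^-}\varrho_{p'}(v).
\]
Under the normalization $\|u\|_{p(x)}=1$, part (i) of the modular--norm proposition gives $\varrho_p(u)=1$, and similarly $\varrho_{p'}(v)=1$; hence $\int_\Omega|uv|\,dx\le \frac{1}{p^-}+\frac{1}{(p')^-}$, which is the claim in the normalized case. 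Undoing the normalization, i.e. multiplying through by $\|u\|_{p(x)}\|v\|_{p'(x)}$, recovers the stated inequality in full, and the final bound by $2\|u\|_{p(x)}\|v\|_{p'(x)}$ follows since $\frac{1}{p^-}+\frac{1}{(p')^-}\le 1+1=2$.

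I expect the only mild subtlety to be bookkeeping around the normalization and the measurability of the integrand, both of which are routine: $uv$ is measurable as a product of measurable functions, and the integrals in question are finite precisely because $u\in L^{p(x)}(\Omega)$ and $v\in L^{p'(x)}(\Omega)$ make the modulars finite. One should also note that $p'\in C_+(\bar\Omega)$ whenever $p\in C_+(\bar\Omega)$ (since $1<p^-\le p^+<\infty$ forces $1<(p')^-\le (p')^+<\infty$), so the space $L^{p'(x)}(\Omega)$ and its modular are well defined and the modular--norm proposition applies to $v$ as well. There is no genuine obstacle here; the statement is the standard variable-exponent Hölder inequality and the argument above is the standard one.
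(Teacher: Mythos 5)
The paper does not prove this proposition itself, deferring to the cited references (Diening; Fan--Zhao), and your argument is precisely the standard one found there: pointwise Young's inequality with exponent $t=p(x)$, bounding $1/p(x)$ by $1/p^-$ and $1/p'(x)$ by $1/(p')^-$, integrating, and invoking the modular--norm relation $\|u\|_{p(x)}=1\Rightarrow\varrho_p(u)=1$ (valid because $p\in C_+(\bar\Omega)$ gives $p^+<\infty$). Your proof is correct and takes the same route the paper implicitly relies on.
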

From this, we deduce that if $\displaystyle \frac{1}{q(x)}=\frac{1}{p_1(x)}+ \frac{1}{p_2(x)}$, then for any $u_1\in L^{p_1(x)}$ and $u_2\in L^{p_2(x)}$, then we have
\[
\|u_1 u_2\|_{q(x)}\leq\left(\left(\frac{q}{p_1}\right)^{+}+ \left(\frac{q}{p_2}\right)^{+}\right)\|u_1\|_{p_1(x)}\|u_2\|_{p_2(x)}.
\]
For details and proof, see, e.g., \cite{Diening,Fan_Lp(x)}. \par

Next, we introduce variable-order Sobolev spaces. For symmetric continuous functions $p:\bar{\Omega}\times \bar{\Omega} \to (1,\infty)$ and $s: \bar{\Omega}\times \bar{\Omega} \to (0,1)$ such that $0<s^{-}\leq s^{+}<1$ and $s^{+}p^{+}<N$, we define the modular
\[
\varrho(u)= \varrho_{W^{s(x,y),p(x,y)}(\Omega)}(u)\coloneqq \int_\Omega\int_\Omega\frac{|u(x)-u(y)|^{p(x,y)}}{|x-y|^{N+s(x,y)p(x,y)}}dxdy+ \int_\Omega |u|^{p(x,x)}dx
\]
and the corresponding Luxemburg norm
\[
\|u\|=\|u\|_{s(x,y),p(x,y);\Omega}\coloneqq\inf\left\{\lambda>0\;\middle|\;\varrho\left(\frac{u}{\lambda}\right)=\int_\Omega\int_\Omega\frac{|u(x)-u(y)|^{p(x,y)}}{\lambda^{p(x,y)} |x-y|^{N+s(x,y)p(x,y)}}dxdy+ \int_\Omega \frac{|u|^{p(x,x)}}{\lambda^{p(x,x)}}dx <1\right\}
\]
letting $\inf\emptyset=\infty$. The Sobolev space $W^{s(x,y),p(x,y)}(\Omega)$ with variable exponents $s(x,y)$ and $p(x,y)$ is defined by
\[
W^{s(x,y),p(x,y)}(\Omega)\coloneqq\left\{u\in L^{p(x,x)}(\Omega)\mid \|u\|_{s(x,y),p(x,y);\Omega} <\infty\right\}.
\]
Furthermore, the closure of $C_c^\infty(\Omega)$ with respect to $\|\cdot\|_{s(x,y),p(x,y);\Omega}$ is denoted by $W_{0}^{s(x,y),p(x,y)}(\Omega) \subset W^{s(x,y),p(x,y)}(\Omega)$. These spaces are separable and uniformly convex. This definition of $W_{0}^{s(x,y),p(x,y)}(\Omega)$ is adopted in \cite{variable-order_p(x)_Sobolev,Nguyen}. However, note that there is another slightly different definition: \cite{variable-order_p(x),variable-order_H,Chammem} adopt another one
\[
\tilde{W}_{0}^{s(x,y),p(x,y)}(\Omega)=\{u\in W^{s(x,y),p(x,y)}(\mathbb{R}^N)\mid u=0\text{ a.e. in $\mathbb{R}^N\setminus\Omega$}\}.
\]
Even if we identify any function defined on $\Omega$ with its zero extension, these definitions are not equivalent, and the latter defines a smaller space with a stronger topology. In addition, the latter definition requires that $p(\cdot,\cdot)$ and $s(\cdot,\cdot)$ are defined in the whole space, so it does not fit our situation. In this paper, we adopt the former definition based on the closure. On the other hand, if one considers $\tilde{W}_{0}^{s(x,y),p(x,y)}(\Omega)$ as a space to which the weak solutions belong, it is natural to treat the generalized restricted fractional Laplacian $(-\Delta)_{p(\cdot,\cdot),\mathrm{res}}^{s(\cdot,\cdot)}$ instead of $(-\Delta)_{p(\cdot,\cdot)}^{s(\cdot,\cdot)}$ under the exterior boundary condition $u=0$ a.e. in $\mathbb{R}^N\setminus\Omega$, which is also natural in the constant exponent fractional case. \par

Here, we state a variable-exponent version of the Hardy-Littlewood-Sobolev inequality (see \cite{variable_exponent_HLS}). We use this inequality to estimate the energy corresponding to the Choquard term in \eqref{cho}. 
\begin{proposition}\label{HLS}
Let $p_1,p_2\in C^{+}(\Omega)$ and $\alpha:\overline{\Omega}\times \overline{\Omega}\to\mathbb{R}$ be a continuous function such that $0<\alpha^{-}\leq\alpha^{+}<N$. If 
\[
\frac{1}{p_1(x)}+ \frac{\alpha(x,y)}{N}+\frac{1}{p_2(y)}=2\quad (\forall x,y\in\Omega),
\]
then there exists a constant $C>0$ such that for any $f_1\in L^{p_1^{+}} (\Omega)\cap L^{p_1^{-}} (\Omega)$ and for any $f_2\in L^{p_2^{+}} (\Omega)\cap L^{p_2^{-}} (\Omega)$, we have
\[
\left|\int_{\Omega} \int_{\Omega}\frac{f_1(x)f_2(y)}{|x-y|^{\alpha(x,y)}}dxdy\right|\leq C(\|f_1\|_{L^{p_1^{+}}} \|f_2\|_{L^{p_2^{+}}}+ \|f_1\|_{L^{p_1^{-}}} \|f_2\|_{L^{p_2^{-}}}).
\]
\end{proposition}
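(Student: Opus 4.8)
The plan is to dominate the variable-exponent kernel $|x-y|^{-\alpha(x,y)}$ by a sum of two \emph{constant}-exponent kernels and then invoke the classical Hardy--Littlewood--Sobolev inequality on each piece. The first step is the observation that the pointwise identity $\frac{1}{p_1(x)}+\frac{\alpha(x,y)}{N}+\frac{1}{p_2(y)}=2$ forces $\alpha(x,y)=N\bigl(2-\frac{1}{p_1(x)}-\frac{1}{p_2(y)}\bigr)$, whence
\[
\alpha^{+}=N\Bigl(2-\tfrac{1}{p_1^{+}}-\tfrac{1}{p_2^{+}}\Bigr),\qquad
\alpha^{-}=N\Bigl(2-\tfrac{1}{p_1^{-}}-\tfrac{1}{p_2^{-}}\Bigr),
\]
so that $\frac{1}{p_1^{+}}+\frac{\alpha^{+}}{N}+\frac{1}{p_2^{+}}=2$ and $\frac{1}{p_1^{-}}+\frac{\alpha^{-}}{N}+\frac{1}{p_2^{-}}=2$. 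These are exactly the balance conditions under which classical HLS applies with singularity exponent $\alpha^{+}$ and integrability indices $(p_1^{+},p_2^{+})$, and with singularity exponent $\alpha^{-}$ and indices $(p_1^{-},p_2^{-})$; the admissibility requirements $0<\alpha^{\pm}<N$ and $1<p_i^{\pm}<\infty$ hold by hypothesis.

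Next I would prove the elementary pointwise estimate that for all $x\neq y$ in $\Omega$,
\[
\frac{1}{|x-y|^{\alpha(x,y)}}\;\le\;\frac{1}{|x-y|^{\alpha^{+}}}+\frac{1}{|x-y|^{\alpha^{-}}}.
\]
Writing $t=|x-y|>0$ and using $\alpha^{-}\le\alpha(x,y)\le\alpha^{+}$: if $t\le 1$ then $\beta\mapsto t^{-\beta}$ is nondecreasing, so $t^{-\alpha(x,y)}\le t^{-\alpha^{+}}$; if $t>1$ then it is nonincreasing, so $t^{-\alpha(x,y)}\le t^{-\alpha^{-}}$. In either case the asserted inequality holds. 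After replacing $f_1,f_2$ by $|f_1|,|f_2|$ (the bound for $\bigl|\iint\cdot\bigr|$ follows from the bound for the absolute value of the integrand), this gives
\[
\iint_{\Omega\times\Omega}\frac{|f_1(x)||f_2(y)|}{|x-y|^{\alpha(x,y)}}\,dx\,dy
\;\le\;\iint_{\Omega\times\Omega}\frac{|f_1(x)||f_2(y)|}{|x-y|^{\alpha^{+}}}\,dx\,dy
+\iint_{\Omega\times\Omega}\frac{|f_1(x)||f_2(y)|}{|x-y|^{\alpha^{-}}}\,dx\,dy.
\]

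Finally I would apply the classical Hardy--Littlewood--Sobolev inequality on $\mathbb{R}^N$ (extending $f_1,f_2$ by zero outside $\Omega$) to each term: the first is bounded by $C_1\|f_1\|_{L^{p_1^{+}}}\|f_2\|_{L^{p_2^{+}}}$ and the second by $C_2\|f_1\|_{L^{p_1^{-}}}\|f_2\|_{L^{p_2^{-}}}$, which is meaningful precisely because of the standing hypothesis $f_1\in L^{p_1^{+}}(\Omega)\cap L^{p_1^{-}}(\Omega)$ and $f_2\in L^{p_2^{+}}(\Omega)\cap L^{p_2^{-}}(\Omega)$ --- this is where that double-integrability assumption is used. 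Taking $C=\max\{C_1,C_2\}$ yields the claim. I expect no serious obstacle: the argument is essentially a one-line reduction to the classical inequality, and the only step demanding care is the exponent bookkeeping of the first part, namely verifying that the extremal singularity exponents $\alpha^{\pm}$ are HLS-conjugate to the extremal integrability indices $(p_1^{\pm},p_2^{\pm})$, which is forced by the given pointwise balance identity. (Incidentally, the boundedness of $\Omega$ is not actually needed for this route.)
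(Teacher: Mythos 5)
Your argument is correct. The paper itself does not prove Proposition~\ref{HLS}; it simply cites the reference of Alves and Tavares for it, and your reconstruction is in fact the standard proof found there: split the kernel pointwise via $|x-y|^{-\alpha(x,y)}\le |x-y|^{-\alpha^{+}}+|x-y|^{-\alpha^{-}}$ (case $|x-y|\le 1$ versus $|x-y|>1$), observe that the separated-variable identity $\alpha(x,y)=N\bigl(2-\tfrac{1}{p_1(x)}-\tfrac{1}{p_2(y)}\bigr)$ forces the extremal exponents to satisfy the classical HLS balance $\tfrac{1}{p_1^{\pm}}+\tfrac{\alpha^{\pm}}{N}+\tfrac{1}{p_2^{\pm}}=2$, and then apply the classical bilinear Hardy--Littlewood--Sobolev inequality twice after extending $f_1,f_2$ by zero. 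Your side remarks are also accurate: the hypothesis $\alpha^{+}<N$ is genuinely needed (it encodes $\tfrac{1}{p_1^{+}}+\tfrac{1}{p_2^{+}}>1$, which is not automatic from $p_i^{+}>1$), the double integrability of $f_1,f_2$ is exactly what the two HLS applications consume, and the boundedness of $\Omega$ plays no role.
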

In the setting of \eqref{cho}, noting that
\[
\frac{1}{\sigma_\alpha(x)}+\frac{\alpha(x)+\alpha(y)}{2N}+\frac{1}{\sigma_\alpha(y)}=2
\]
and that
\[
p(x,x)\leq r(x)\sigma_\alpha^{-}\leq r(x)\sigma_\alpha^{+}\leq p_s^*(x),
\]
by applying Proposition \ref{HLS} to the case where $p_1=p_2=\sigma_\alpha$ and $f_1=f_2=|u|^{r(x)}$ for any $u\in W^{s(x,y),p(x,y)}(\Omega)\hookrightarrow L^{r(x)\sigma_\alpha^{-}} (\Omega)\cap L^{r(x)\sigma_\alpha^{+}} (\Omega)$, we obtain
\begin{align*}
&\phantom{=}\int_{\Omega} \int_{\Omega}\frac{|u(x)|^{r(x)}|u(y)|^{r(y)}}{|x-y|^{\alpha(x,y)}}dxdy\\ 
&\leq C\max\{\|u\|_{r(x)\sigma_\alpha^{-}}^{2 r^{+}}, \|u\|_{r(x)\sigma_\alpha^{-}}^{2 r^{-}}, \|u\|_{r(x)\sigma_\alpha^{+}}^{2 r^{+}}, \|u\|_{r(x)\sigma_\alpha^{+}}^{2 r^{-}}\}.
\end{align*}
Furthermore, inferring as in \cite{variable_exponent_HLS}, we can see the functional
\[
K: W^{s(x,y),p(x,y)}(\Omega)\to\mathbb{R}; u\mapsto \int_{\Omega} \int_{\Omega}\frac{|u(x)|^{r(x)}|u(y)|^{r(y)}}{2r(x)|x-y|^{\alpha(x,y)}}dxdy
\]
is Fr\'{e}chet differentiable with derivative
\[
K'[u]v=\int_{\Omega} \int_{\Omega}\frac{|u(x)|^{r(x)}|u(y)|^{r(y)-2}v(y)}{|x-y|^{\alpha(x,y)}}dxdy.
\]

\section{Embeddings}
In this section, we establish some variable-order versions of the Sobolev embedding theorem with variable exponents. 
As for the subcritical case, combining the proofs of Theorem 3.2 in \cite{variable-order_p(x)_Sobolev} and Theorem 3.2 in \cite{uniformly_continuous} (or Theorem 3.6 in \cite{variable-order_p(x)}), we obtain the following compact embedding result.
\begin{proposition}
Let $\Omega\subset\mathbb{R}^N$ be a bounded Lipschitz domain. Assume that continuous and symmetric functions $p\in C_{+}(\bar{\Omega}\times \bar{\Omega})$ and $s: \bar{\Omega}\times \bar{\Omega} \to\mathbb{R}$ satisfy $0<s^{-}\leq s^{+}<1$ and $s^{+}p^{+}<N$. Let $q\in C_{+}(\bar{\Omega})$ be such that $p(x,x)\leq q(x)< p_s^*(x)$ for any $x\in \bar{\Omega}$ (and thus $\inf(p_s^*-q)>0$ since $\bar{\Omega}$ is compact). Then, there exists a constant $C>0$ such that for any $u\in W^{s(x,y),p(x,y)}(\Omega)$, we have
\[
\|u\|_{L^{q(x)}(\Omega)}\leq C \|u\|_{W^{s(x,y),p(x,y)}(\Omega)}.
\]
Moreover, this embedding is compact.
\end{proposition}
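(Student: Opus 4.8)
The plan is to derive both the continuous embedding and its compactness from one finite-covering argument that reduces everything to the classical constant-order fractional Sobolev theory on small pieces of $\Omega$. Since $\overline{\Omega}$ is compact and $s,p,q$ are continuous with $q<p_s^*$ pointwise, $\delta:=\inf_{\overline{\Omega}}(p_s^*-q)>0$. After rescaling $\Omega$ so that $\operatorname{diam}\Omega<1$, I would cover $\overline{\Omega}$ by finitely many balls $B_1,\dots,B_m$ whose radii are small enough that each piece $\Omega_j:=\Omega\cap B_j$ is a bounded Lipschitz domain and that the oscillations of $s$ and $p$ on $\overline{\Omega_j}\times\overline{\Omega_j}$ and of $q$ on $\overline{\Omega_j}$ stay below a threshold $\kappa$. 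On $\Omega_j$ I would then fix \emph{constant} exponents $s_j,p_j$ strictly below $\inf_{\overline{\Omega_j}\times\overline{\Omega_j}}s$ and $\inf_{\overline{\Omega_j}\times\overline{\Omega_j}}p$, with uniform gaps, so that $0<s_j<1$, $1<p_j$, $s_jp_j<N$, $s_jp_j<s(x,y)p(x,y)$ throughout $\Omega_j\times\Omega_j$, and---since $\kappa$ is small while the margin is $\delta$---$q_j^+:=\sup_{\overline{\Omega_j}}q<\frac{Np_j}{N-s_jp_j}=:(p_j)^*_{s_j}$.

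On each $\Omega_j$ the classical constant-order fractional Sobolev embedding gives $W^{s_j,p_j}(\Omega_j)\hookrightarrow L^{(p_j)^*_{s_j}}(\Omega_j)\hookrightarrow L^{q_j^+}(\Omega_j)\hookrightarrow L^{q(x)}(\Omega_j)$, and since $q_j^+<(p_j)^*_{s_j}$ the first embedding is in fact \emph{compact} by the fractional Rellich--Kondrachov theorem. Hence the whole statement follows once we prove the single estimate $\|u\|_{W^{s_j,p_j}(\Omega_j)}\le C\,\|u\|_{W^{s(x,y),p(x,y)}(\Omega)}$ for each $j$: summing $\|u\|_{L^{q(x)}(\Omega_j)}\le C_j\|u\|_{W^{s_j,p_j}(\Omega_j)}$ over the finite cover yields the continuous embedding; and any sequence bounded in $W^{s(x,y),p(x,y)}(\Omega)$ is then bounded in every $W^{s_j,p_j}(\Omega_j)$, hence precompact in every $L^{q_j^+}(\Omega_j)$, so a single (diagonal) subsequence converges in $L^{q_j^+}(\Omega_j)$---a fortiori in $L^{q(x)}(\Omega_j)$---for all $j$; its limits agree a.e.\ on the overlaps, glue to one $u$, and $\|u_{n_k}-u\|_{L^{q(x)}(\Omega)}\le\sum_j\|u_{n_k}-u\|_{L^{q(x)}(\Omega_j)}\to0$, which is compactness.

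It remains to estimate $\|u\|_{W^{s_j,p_j}(\Omega_j)}$. The Lebesgue part is immediate: $p_j\le p^-\le p(x,x)$ and $\Omega$ is bounded, so $\|u\|_{L^{p_j}(\Omega_j)}\le C\|u\|_{L^{p(x,x)}(\Omega)}$. For the Gagliardo part $[u]_{s_j,p_j;\Omega_j}^{p_j}=\int_{\Omega_j}\int_{\Omega_j}\frac{|u(x)-u(y)|^{p_j}}{|x-y|^{N+s_jp_j}}\,dx\,dy$, I would split $\Omega_j\times\Omega_j$ along $\{|u(x)-u(y)|\ge1\}$ and its complement. On $\{|u(x)-u(y)|\ge1\}$ one has $|u(x)-u(y)|^{p_j}\le|u(x)-u(y)|^{p(x,y)}$ and, because $|x-y|<1$ and $s_jp_j<s(x,y)p(x,y)$, also $|x-y|^{-(N+s_jp_j)}\le|x-y|^{-(N+s(x,y)p(x,y))}$, so that piece is dominated by the variable-order modular of $u$. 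On $\{|u(x)-u(y)|<1\}$ I would apply the variable-exponent Hölder inequality with exponent $p(x,y)/p_j>1$, writing the integrand as $\left(\frac{|u(x)-u(y)|^{p(x,y)}}{|x-y|^{N+s(x,y)p(x,y)}}\right)^{p_j/p(x,y)}|x-y|^{-\beta(x,y)}$; the reason for having chosen $s_j$ and $p_j$ \emph{strictly} below the local infima is precisely that this keeps the residual exponent $\beta(x,y)$ below $N$ with a uniform margin on $\Omega_j\times\Omega_j$, so that $|x-y|^{-\beta(x,y)}$ is integrable and Hölder yields a bound by a power of the variable-order modular. Converting these modular bounds into norm bounds via the modular--norm relation recalled in the preliminary section and summing over the finite cover completes the proof. (With $q$ replaced by any continuous $r$ satisfying $r<p_s^*$, the same argument gives $W^{s(x,y),p(x,y)}(\Omega)\hookrightarrow L^{r(x)}(\Omega)$.)

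I expect the Gagliardo-seminorm comparison $[u]_{s_j,p_j;\Omega_j}\lesssim\|u\|_{W^{s(x,y),p(x,y)}(\Omega)}$ to be the main obstacle: controlling a \emph{constant}-order fractional energy by the \emph{variable}-order modular is not a pointwise domination, and it is exactly this that forces both the fineness of the cover (small oscillation of $s$ and $p$) and the strict lowering of the local constant exponents (so that $\beta(x,y)<N$). Once this is in hand, the local Sobolev and Rellich--Kondrachov embeddings, the finite summation with diagonal extraction, and the modular/norm conversions are all routine.
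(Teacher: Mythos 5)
Your argument is correct, and it is also self-contained, which is more than the paper offers: for this proposition the paper supplies no proof at all, only the remark that one can combine the proofs of Theorem 3.2 in Liu--Fu, Theorem 3.2 in Ho--Kim, and Theorem 3.6 in Biswas--Tiwari. So there is no in-text proof for your proposal to coincide with or deviate from, but your covering-and-localization scheme is exactly the mechanism those references (and most proofs of subcritical variable-exponent embeddings) use, and it is also the skeleton the paper reuses in its own proof of the \emph{critical} embedding (Theorem~\ref{CriticalContinuousEmbedding}). The instructive comparison is with that critical-case proof: there the author cannot afford to lower the local exponents strictly below $\inf s$ and $\inf p$ (that would cost a positive amount of the Sobolev exponent, unaffordable when $q$ touches $p_s^*$), so the seminorm comparison is done with the actual local infima $s_i^-,p_i^-$ at the price of introducing the weight $|x-y|^{-(s-s_i^-)p-s_i^-(p-p_i^-)}$, whose boundedness is exactly the log-H\"older condition (the constant $C_L$). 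In your subcritical argument the uniform gap $\delta=\inf(p_s^*-q)>0$ is what lets you lower $s_j,p_j$ strictly and with uniform margins, which makes the residual exponent $\beta(x,y)(p(x,y)/p_j)'$ stay uniformly below $N$ \emph{without any modulus-of-continuity hypothesis beyond plain continuity} --- the log-H\"older weight simply never appears. That trade-off (strict exponent lowering in exchange for dropping log-H\"older) is the genuine content of your proof and is correctly identified as the key obstacle. Two small remarks: (i) the split along $\{|u(x)-u(y)|\geq 1\}$ is not strictly needed --- the H\"older step with the factorization $\bigl(|u(x)-u(y)|^{p(x,y)}/|x-y|^{N+s(x,y)p(x,y)}\bigr)^{p_j/p(x,y)}\,|x-y|^{-\beta(x,y)}$ is valid on all of $\Omega_j\times\Omega_j$, and the $\geq 1$ piece only gives you a cleaner (linear) modular bound; (ii) the subadditivity $\|f\|_{L^{q(x)}(\Omega)}\leq\sum_j\|f\|_{L^{q(x)}(\Omega_j)}$ that you use to glue the compactness estimate does hold for Luxemburg norms with $q\geq 1$, but it is worth stating, since it is not a pointwise triangle inequality. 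Neither point is a gap.
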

However, as for the critical embedding, the situation is more complicated. As emphasized in \cite{Diening}, a concept called log-H\"{o}lder continuity often plays an important role in considering variable exponents. In particular, it is necessary to assume the log-H\"{o}lder continuity of exponents when considering the critical embedding. 
\begin{definition}
Let $f:\Omega\to\mathbb{R}$ be a continuous function defined in a bounded domain $\Omega\subset\mathbb{R}^N$. If there exists $C>0$ such that for any $x,y\in\Omega$, we have
\[
|f(x)-f(y)|\log\frac{1}{|x-y|}\leq C,
\]
then we say $f$ is (locally) log-H\"{o}lder continuous. Let $C^\mathrm{log}(\Omega)$ stand for the set of all log-H\"{o}lder continuous functions defined in $\Omega$. 
\end{definition}
Under the assumption of the log-H\"{o}lder continuity, we now prove a new critical embedding theorem. 

\CriticalContinuousEmbedding*

\begin{proof}
Since any point $(x,x)\in\ell\coloneqq \{(x,y)\in\overline{\Omega}^2\mid x=y\}$ is a local minimum point of $s$ and $p$ and since $\ell$ is compact, there exists $\varepsilon\in (0,1)$ such that for any $x,y\in\overline{\Omega}$, 
\[
|x-y|<\varepsilon\Rightarrow p(x,y)\geq \underline{p}
\]
and
\[
|x-y|<\varepsilon\Rightarrow s(x,y)\geq\underline{s},
\]
where $\underline{p}$ [resp. $\underline{s}$] is the common local minimum of $p(\cdot,\cdot)$ [resp. $s(\cdot,\cdot)$], whose existence is assured by the fact that $\Omega$ is connected. Combining this with the log-H\"{o}lder continuity of $s$ and $p$, we obtain
\[
c_{p}\coloneqq\sup_{\substack{x,y\in\Omega \\ 0<|x-y|<1/2}}|p(x,y)-\underline{p}|\log\frac{1}{|x-y|}<\infty
\]
and
\[
c_{s}\coloneqq\sup_{\substack{x,y\in\Omega \\ 0<|x-y|<1/2}}|s(x,y)-\underline{s}| \log\frac{1}{|x-y|} <\infty.
\]

First, we impose the following additional assumption on $q(x)$:
\begin{itemize}
\item[(E1)] There exists $\varepsilon>0$ such that for any $x\in\overline{\Omega}$, we have
\[
q|_{B_\varepsilon(x)}^{+}\leq\frac{Np|_{B_\varepsilon(x)\times B_\varepsilon(x)}^{-}}{N-p|_{B_\varepsilon(x) \times B_\varepsilon(x)}^{-} s|_{B_\varepsilon(x) \times B_\varepsilon(x)}^{-}}.
\]
\end{itemize}

Cover $\Omega$ with a family $\{\tilde{Q}_i\}_{i\in\mathbb{N}}$ of countably many closed cubes with sides of length $\varepsilon>0$. Let $Q_i\coloneqq \tilde{Q}_i\cap \Omega$. 
By (E1) and the uniform continuity, taking $\varepsilon>0$ small enough, we have
\[
q_i^{+}\leq\frac{Np_i^{-}}{N-p_i^{-}s_i^{-}},
\]
where $q_i^{\pm}\coloneqq q|_{Q_i}^{\pm}$, $p_i^{\pm}\coloneqq p|_{(Q_i)^2}^{\pm}$, $s_i^{\pm}\coloneqq s|_{(Q_i)^2}^{\pm}$. Let $v\in W^{s(x,y),p(x,y)}(\Omega)$ and $i\in\mathbb{N}$. Define a measure $\mu_i$ and a function $F$ as follows:
\[
d\mu(x,y)\coloneqq\frac{dxdy}{|x-y|^{N-s_i^{-}p_i^{-}}},\quad F(x,y)\coloneqq \frac{|v(x)-v(y)|}{|x-y|^{2s_i^{-}}}.
\]
Consider the seminorm
\[
[u]_{s(x,y),p(x,y);O}\coloneqq\inf\left\{\lambda>0\,\middle|\, \int_O\int_O\frac{|u(x)-u(y)|^{p(x,y)}}{\lambda^{p(x,y)} |x-y|^{N+s(x,y)p(x,y)}}dxdy <1\right\}.
\]
Since
\begin{align*}
&\phantom{\coloneqq} \sup_{(x,y):x\neq y}|x-y|^{-(s(x,y)-s_i^{-})p(x,y)-s_i^{-}(p(x,y)-p_i^{-})} \\
&= \sup_{(x,y):x\neq y} e^{-(s(x,y)-s_i^{-})p(x,y)\log|x-y|-s_i^{-}(p(x,y)-p_i^{-})\log|x-y|} \\
&\leq \sup_{(x,y):x\neq y} e^{-p^{+}(s(x,y)-s_i^{-})\log|x-y|-s^{+}(p(x,y)-p_i^{-})\log|x-y|} \eqqcolon C_L<\infty,
\end{align*}
we have
\begin{align*}
1&=\int_{Q_i}\int_{Q_i}\frac{|v(x)-v(y)|^{p(x,y)}}{[v]_{s,p;Q_i}^{p(x,y)}|x-y|^{N+s(x,y)p(x,y)}}dxdy \\
&= \int_{Q_i}\int_{Q_i}\left|\frac{F(x,y)}{[v]_{s,p;Q_i}}\right|^{p(x,y)}\frac{1}{|x-y|^{-(s(x,y)-s_i^{-})p(x,y)-s_i^{-}(p(x,y)-p_i^{-})}} d\mu_i(x,y) \\
&\geq (C_L+1)^{-1} \int_{Q_i}\int_{Q_i}\left|\frac{F(x,y)}{[v]_{s,p;Q_i}}\right|^{p(x,y)}d\mu_i(x,y) \\
&\geq \int_{Q_i}\int_{Q_i}\left|\frac{F(x,y)}{(C_L+1)^{1/p_i^{-}} [v]_{s,p;Q_i}}\right|^{p(x,y)}d\mu_i(x,y).
\end{align*}
Therefore, $\|F\|_{L_{\mu_i}^{p(x,y)}(Q_i\times Q_i)}\leq (C_L+1)^{1/p_i^{-}} [v]_{s,p;Q_i}$. On the other hand,
\begin{align*}
\|F\|_{L_{\mu_i}^{p_i^{-}}(Q_i\times Q_i)}&= \left( \int_{Q_i}\int_{Q_i}\left|\frac{|v(x)-v(y)|}{|x-y|^{2s_i^{-}}}\right|^{p_i^{-}}\frac{dxdy}{|x-y|^{N-s_i^{-}p_i^{-}}} \right)^{1/p_i^{-}} \\
&= \left( \int_{Q_i}\int_{Q_i}\frac{|v(x)-v(y)|^{p_i^{-}}}{|x-y|^{N+s_i^{-}p_i^{-}}} dxdy \right)^{1/p_i^{-}}=[v]_{s_i^{-},p_i^{-};Q_i}
\end{align*}
and since $p_i^{-}\leq p|_{Q_i\times Q_i}$, by Corollary 3.3.4 in \cite{Diening},
\begin{align*}
\|F\|_{L_{\mu_i}^{p_i^{-}}(Q_i\times Q_i)}&\leq 2(1+\mu_i (Q_i\times Q_i)) \|F\|_{L_{\mu_i}^{p(x,y)}(Q_i\times Q_i)} \\
&\leq  2\left(1+|Q_i|\cdot\frac{N|B_1|\cdot 2^{s_i^{-}p_i^{-}}}{s_i^{-}p_i^{-}}\right) \|F\|_{L_{\mu_i}^{p(x,y)}(Q_i\times Q_i)}.
\end{align*}
Combining these, we obtain $[v]_{s_i^{-},p_i^{-};Q_i}\leq C [v]_{s(x,y),p(x,y);Q_i}$. Since $p_i^{-}\leq p|_{(Q_i)^2}$, we also have $\|v\|_{L^{p_i^{-}}(Q_i)}\leq 2(1+|Q_i|) \|v\|_{L^{p(x,x)}(Q_i)}$. By summing them, we get $\|v\|_{W^{s_i^{-},p_i^{-}}(Q_i)}\leq C\|v\|_{W^{s(x,y),p(x,y)}(Q_i)}$. \par
As in the proof of Theorem 3.5 in \cite{uniformly_continuous}, we can deduce the existence of an extension $\tilde{v}\in W^{s_i^{-},p_i^{-}}(\mathbb{R}^N)$ with compact support such that $\tilde{v}=v$ in $Q_i$, and $\|v\|_{L^{q_i^{+}}(Q_i)}\leq\| \tilde{v}\|_{L^{(p_i^{-})_{s_i^{-}}^*}(\mathbb{R}^N)}\leq C\|v\|_{W^{s_i^{-},p_i^{-}}(Q_i)}$. On the other hand, $\|v\|_{L^{q(x)}(Q_i)}\leq 2(1+|Q_i|) \|v\|_{L^{q_i^{+}}(Q_i)}$. Combining these, we obtain $\|v\|_{L^{q(x)}(Q_i)}\leq C \|v\|_{W^{s(x,y),p(x,y)}(Q_i)}$. \par
In order to prove the embedding, by the closed graph theorem and the homogeneity of the inequality, it suffices to show that we have $\varrho_{L^{q(x)}(\Omega)}(v)<\infty$ for any $v\in W^{s(x,y),p(x,y)}(\Omega)$ such that $\|v\|_{W^{s(x,y),p(x,y)}(\Omega)}=1$. Take such $v$ arbitrarily. Then, $\varrho_{W^{s(x,y),p(x,y)}(Q_i)}(v)\leq \varrho_{W^{s(x,y),p(x,y)}(\Omega)}(v)=1$ and thus $\|v\|_{W^{s(x,y),p(x,y)}(Q_i)}\leq 1$. 
\begin{align*}
\int_{Q_i}|v|^{q(x)}dx&\leq \max\{\|v\|_{L^{q(x)}(Q_i)}^{q_i^{+}}, \|v\|_{L^{q(x)}(Q_i)}^{q_i^{-}}\}\\
&\leq C\|v\|_{W^{s(x,y),p(x,y)}(Q_i)}^{q_i^{-}} \\
&\leq C\varrho_{W^{s(x,y),p(x,y)}(Q_i)}(v)^{q_i^{-}/p_i^{+}} \\
&\leq C\varrho_{W^{s(x,y),p(x,y)}(Q_i)}(v).
\end{align*}
Summing up for all $i$, we obtain 
\begin{equation*} \label{E}
\varrho_{L^{q(x)}(\Omega)}(v)\leq C\varrho_{W^{s(x,y),p(x,y)}(\Omega)}(v) <\infty.
\end{equation*}
Note that we can take $C$ in \eqref{E} depending only on $N$, $p^{-}$, $p^{+}$, $s^{-}$, $s^{+}$ and $C_L$. Such a constant does not depend on $\varepsilon$. $C_L$ depends only on $p^{-}$, $p^{+}$, $c_p$ and $c_s$. \par
Finally, we consider the general case without the condition (E1). Take any $v\in C_c^\infty(\Omega)$. Let $\{q_n\}$ be a sequence satisfying (E1) with $\varepsilon=\varepsilon_n=o_n(1)$ and approximating $q$ pointwise from below. By Fatou's lemma (more precisely by Corollary 3.5.4 in \cite{Diening}), $\displaystyle \|v\|_{q(x)}\leq\liminf_{n\to\infty}\|v\|_{q_n(x)}$. Since we can take $C$ in \eqref{E} independent of $\varepsilon$, we get $\|v\|_{q(x)}\leq C \|v\|_{W^{s(x,y),p(x,y)}}$. We can conclude by a density argument. 
\end{proof}
Furthermore, the continuous embedding becomes compact under a certain logarithmic condition on the approach speed of $q(\cdot)$ to the critical Sobolev exponent. It is noteworthy that the compact embedding holds even if $q(\cdot)$ reaches the critical Sobolev exponent. 

\CriticalCompactEmbedding*

\begin{proof}
Without loss of generality, we may assume $x_0=0\in\Omega$. 
If a measurable set $A\subset\Omega$ satisfies $|A|<1$, then we have
\begin{align*}
\int_{A\cap \{v\geq 1\}}|v|^{q(x)}dx&\leq \|v\|_{L^{q|_A^{+}}(A)}^{q|_A^{+}}\\ 
&\leq C_1\|v\|_{L^{p_s^*(x)}(A)}^{q|_A^{+}}\|1\|_{L^{\frac{p_s^*(x)q|_A^{+}}{p_s^*(x)-q|_A^{+}}}(A)}^{q|_A^{+}} \\
&\leq C_2 \|v\|_{W^{s(x,y),p(x,y)}(A)}^{q|_A^{+}}\max\{|A|^{\frac{p_s^*|_A^{+}-q|_A^{+}}{p_s^*|_A^{+}}}, |A|^{\frac{p_s^*|_A^{-}-q|_A^{+}}{p_s^*|_A^{-}}}\} \\
&\leq C_3 \|v\|_{W^{s(x,y),p(x,y)}(A)}^{q|_A^{+}} |A|^{\frac{p_s^*|_A^{-}-q|_A^{+}}{p_s^*|_A^{-}}}.
\end{align*}
From \eqref{CriticalRate} and the log-H\"{o}lder continuity of $p$ and $s$, we have
\begin{align*}
&\phantom{=}\left.p_s^*\right|_{B_{\varepsilon^n}(0)\setminus B_{\varepsilon^{n+1}}(0)}^{-}-\left. q\right|_{B_{\varepsilon^n}(0)\setminus B_{\varepsilon^{n+1}}(0)}^{+} \\
&\leq \frac{C_0}{(-\log \varepsilon^{n+1})^\beta}+\frac{C_0'}{-\log(\varepsilon^n-\varepsilon^{n+1})} \\
&\leq \frac{C}{(-\log \varepsilon^{n+1})^\beta}
\end{align*}
for some constants $C_0',C>0$. Note that
\begin{align*}
|B_{\varepsilon^n}(0)\setminus B_{\varepsilon^{n+1}}(0)|^{\frac{C}{(p_s^*)^{-}(-\log\varepsilon^{n+1})^\beta}}&\leq \left(|B_1(0)|(\varepsilon^{Nn}-\varepsilon^{N(n+1)})\right)^{\frac{C}{(p_s^*)^{-}(n+1)^\beta (-\log\varepsilon)^\beta}} \\
&\leq \max\{|B_1(0)|^{\frac{C}{(p_s^*)^{-}\cdot 2^\beta (\log 2)^\beta}},1\}\cdot \varepsilon^{\frac{n^\beta}{(n+1)^\beta}\cdot\frac{CN}{(p_s^*)^{-} (-\log\varepsilon)^\beta}n^{1-\beta}} \\
&= O(\delta_\varepsilon^{n^{1-\beta}})\quad\text{as $n\to\infty$}
\end{align*}
for $\varepsilon\in (0,1/2)$, where $\delta_\varepsilon$ is a constant depending on $\varepsilon$ such that $\delta_\varepsilon\to 0$ ($\varepsilon\to +0$). \par
Let $\rho>0$. Take any $v$ such that $\|v\|_{W^{s(x,y),p(x,y)}(\Omega)}\leq \rho$. 
Applying Lebesgue's convergence theorem to $n\mapsto \delta^{n^{1-\beta}}$ and the counting measure, we get
\begin{align*}
\int_{B_\varepsilon(0)}|v|^{q(x)}dx &= \int_{B_\varepsilon(0)\cap\{v<1\}}|v|^{q(x)}dx + \int_{B_\varepsilon(0)\cap\{v\geq 1\}}|v|^{q(x)}dx \\
&\leq |B_\varepsilon(0)|+C_3\sum_{n=1}^\infty |B_{\varepsilon^n}(0)\setminus B_{\varepsilon^{n+1}}(0)|^{\frac{\left.p_s^*\right|_{B_{\varepsilon^n}(0)\setminus B_{\varepsilon^{n+1}}(0)}^{-}-\left. q\right|_{B_{\varepsilon^n}(0)\setminus B_{\varepsilon^{n+1}}(0)}^{+}}{\left. p_s^*\right|_{B_{\varepsilon^n}(0)\setminus B_{\varepsilon^{n+1}}(0)}^{-}}} \\
&\leq |B_\varepsilon(0)|+C_3\sum_{n=1}^\infty |B_{\varepsilon^n}(0)\setminus B_{\varepsilon^{n+1}}(0)|^{\frac{C}{(p_s^*)^{-}(\log(1/\varepsilon^{n+1}))^\beta}} \\
&\leq |B_\varepsilon(0)|+C_4\sum_{n=1}^\infty \delta_\varepsilon^{n^{1-\beta}} \\
&\to 0\quad (\varepsilon\to +0).
\end{align*}
Here, $C_4$ depends only on the upper bound $\rho$ of $\|v\|_{W^{s(x,y),p(x,y)}(\Omega)}$ and is independent of $v$. Therefore, 
\begin{equation}\label{errorVanishing}
\lim_{\varepsilon\to +0}\sup_{\|v\|_{W^{s(x,y),p(x,y)}(\Omega)}\leq \rho}\int_{B_\varepsilon(0)}|v|^{q(x)}dx=0.
\end{equation}
Assume $v_n\rightharpoonup v$ in $W^{s(x,y),p(x,y)}(\Omega)$ weakly. By a classical compact embedding, passing to a subsequence, $v_n\to v$ a.e. For any $\varepsilon>0$, by the compact embedding $W^{s(x,y),p(x,y)}(\Omega\setminus B_\varepsilon (0)) \hookrightarrow L^{q(x)}(\Omega\setminus B_\varepsilon (0))$, we have
\[
\int_{\Omega \setminus B_\varepsilon (0)}|v_n-v|^{q(x)}dx\to 0
\]
Combining this with \eqref{errorVanishing}, we obtain
\[
\int_{\Omega}|v_n-v|^{q(x)}dx\to 0,
\]
that is, $v_n\to v$ in $L^{q(x)}(\Omega)$. 
\end{proof}
Since the continuous embedding $W_{0}^{s(x,y),p(x,y)}(\Omega)\hookrightarrow W^{s(x,y),p(x,y)}(\Omega)$ holds obviously, we also obtain the compact embedding $W_{0}^{s(x,y),p(x,y)}(\Omega)\hookrightarrow L^{q(x)}(\Omega)$. \par
The logarithmic condition on the approach speed in Theorem \ref{CriticalCompactEmbedding} is sharp in the following sense.

\CriticalNotCompactEmbedding*

\begin{proof}
By translation and scaling, without loss of generality, we may assume $x_0=0\in B_1(0)\subset \Omega$. Let $\varphi\in C_c^\infty(\mathbb{R}^N)$ be such that $\chi_{B_{1/2}(0)}\leq\varphi\leq\chi_{B_1(0)}$ and define $\varphi_n(x)\coloneqq n^{\frac{N-p(x)s(x)}{p(x)}}\varphi(nx)$, where $p(x)\coloneqq p(x,x),s(x)\coloneqq s(x,x)$. Given arbitrary continuous extensions of $p(\cdot,\cdot)$ and $s(\cdot,\cdot)$ to $\mathbb{R}^N\times\mathbb{R}^N$ that do not change the maxima and the minima, we have
\begin{align*}
&\phantom{=}\int_{\Omega} \int_{\Omega}\frac{|\varphi_n(x)-\varphi_n(y)|^{p(x,y)}}{|x-y|^{N+p(x,y)s(x,y)}}dxdy \\
&\leq \int_{\mathbb{R}^N} \int_{\mathbb{R}^N}\frac{|\varphi_n(x)-\varphi_n(y)|^{p(x,y)}}{|x-y|^{N+p(x,y)s(x,y)}}dxdy \\
&= \int_{\mathbb{R}^N} \int_{\mathbb{R}^N}\frac{n^{\frac{N-p(x'/n)s(x'/n)}{p(x'/n)}p(x'/n,y'/n)} |\varphi(x')-n^{\frac{N-p(x'/n)s(x'/n)}{p(x'/n)}-\frac{N-p(y'/n)s(y'/n)}{p(y'/n)}} \varphi(y')|^{p(x'/n,y'/n)}}{|x'/n-y'/n|^{N+p(x'/n,y'/n)s(x'/n,y'/n)}}\cdot \frac{1}{n^{2N}}dx'dy' \\
&= \int_{\mathbb{R}^N} \int_{\mathbb{R}^N} e^{N\left(\frac{p(x'/n,y'/n)}{p(x'/n)}-1\right)\log n +\left(p(x'/n,y'/n)s(x'/n,y'/n)-\frac{p(x'/n,y'/n)}{p(x'/n)}p(x'/n)s(x'/n)\right)\log n} \\
&\phantom{=======}\times\frac{|\varphi(x')-n^{\frac{N-p(x'/n)s(x'/n)}{p(x'/n)}-\frac{N-p(y'/n)s(y'/n)}{p(y'/n)}} \varphi(y')|^{p(x'/n,y'/n)}}{|x'-y'|^{N+p(x'/n,y'/n)s(x'/n,y'/n)}}dx'dy' \\
&\to \int_{\mathbb{R}^N} \int_{\mathbb{R}^N}\frac{|\varphi(x')-\varphi(y')|^{p(0)}}{|x'-y'|^{N+p(0)s(0)}}dx'dy'<\infty\quad(n\to\infty).
\end{align*}
Here, we used Lebesgue's convergence theorem and the fact that from the H\"{o}lder continuity of $p(\cdot,\cdot)$ and $s(\cdot,\cdot)$, we have
\begin{align*}
&\phantom{=}n^{\frac{N-p(x'/n)s(x'/n)}{p(x'/n)}-\frac{N-p(y'/n)s(y'/n)}{p(y'/n)}} \\
&= e^{\left(\frac{N-p(x'/n)s(x'/n)}{p(x'/n)}-\frac{N-p(y'/n)s(y'/n)}{p(y'/n)}\right)\log n} \\
&\to 1\quad (n\to\infty)
\end{align*}
and
\[
N\left(\frac{p(x'/n,y'/n)}{p(x'/n)}-1\right)\log n +\left(p(x'/n,y'/n)s(x'/n,y'/n)-\frac{p(x'/n,y'/n)}{p(x'/n)}p(x'/n)s(x'/n)\right)\log n\to 0
\]
as $n\to \infty$. Therefore, $\{\varphi_n\}$ is bounded in $W^{s(x,y),p(x,y)}(\Omega)$ and hence has a weak limit $\psi$ up to a subsequence. On the other hand, since $\|\varphi_n\|_{p^{-}}\to 0$ as $n\to\infty$, we have $\psi=0$ a.e. 
However, for $n$ sufficiently large,
\begin{align*}
\int_{\Omega}|\varphi_n(x)|^{q(x)}dx &=\int_{B_1(0)} n^{\frac{N-p(x'/n)s(x'/n)}{p(x'/n)}q(x'/n)-N} |\varphi(x')|^{q(x'/n)}dx' \\
&=\int_{B_1(0)} n^{-\frac{N-p(x'/n)s(x'/n)}{p(x'/n)}(p_s^*(x'/n)-q(x'/n))} |\varphi(x')|^{q(x'/n)}dx' \\
&\geq \int_{B_{1/2}(0)} n^{-\frac{N-p(x'/n)s(x'/n)}{p(x'/n)}(p_s^*(x'/n)-q(x'/n))} dx' \\
&\geq \int_{B_{1/2}(0)} n^{-\frac{N-p(x'/n)s(x'/n)}{p(x'/n)}\cdot\frac{C_0}{-\log(|x'/n|)}} dx' \\
&\geq \int_{B_{1/2}(0)\setminus B_{1/4}(0)} \exp\left(-\frac{N-p(x'/n)s(x'/n)}{p(x'/n)}\cdot\frac{C_0}{(1/2)\log n}\log n\right)dx' \\
&\geq \exp\left(-\frac{N-p^{-}s^{-}}{p^{-}}\cdot 2C_0\right)\cdot |B_{1/2}(0)\setminus B_{1/4}(0)| \\
&>0.
\end{align*}
Therefore, $\{\varphi_n\}$ does not have any subsequence converging to $0$ strongly in $L^{q(x)}(\Omega)$. 
\end{proof}

\section{Proof of the main theorem}
The following functional $I$ associated with the equation \eqref{cho} is introduced for discussion using the variational method:
\[
I[u]\coloneqq \int_{\Omega} \int_{\Omega}\frac{|u(x)-u(y)|^{p(x,y)}}{p(x,y)|x-y|^{N+s(x,y)p(x,y)}}dxdy +\int_{\Omega}\frac{|u|^{p(x,x)}}{p(x,x)}dx - \int_{\Omega} \int_{\Omega}\frac{|u(x)|^{r(x)}|u(y)|^{r(y)}}{2r(x)|x-y|^{\frac{\alpha(x)+\alpha(y)}{2}}}dxdy.
\]
A critical point of $I: W^{s(x,y),p(x,y)}_0(\Omega)\to\mathbb{R}$ is called a \textit{weak solution} of the equation \eqref{cho}. In the case of variable order, the concept of classical solutions no longer makes sense. Therefore, a weak solution is often simply called a solution. \par

To obtain a nontrivial weak solution of \eqref{cho}, we use the classical mountain pass lemma. 
Now, let us prove our main theorem. 
\begin{proof}
First, we check that $I$ has a mountain pass geometry. That is, we prove the following holds:
\begin{itemize}
\item[(MP)] There exist positive constants $d$ and $\rho$ such that $I[u]\geq d$ if $\|u\|=\rho$, and there exists $v\in W^{s(x,y),p(x,y)}_0(\Omega)$ with $\|v\|>\rho$ such that $I[v]\leq 0= I[0]$. 
\end{itemize}
By the Hardy-Littlewood-Sobolev inequality, we have
\begin{align*}
I[u] &\geq C_1 \varrho(u)-C_2\| |u|^{r(x)}\|_{\sigma_\alpha^{+}}^2-C_3 \| |u|^{r(x)}\|_{\sigma_\alpha^{-}}^2 \\
&\geq C_4\min\{\|u\|^{p^{+}},\|u\|^{p^{-}}\}-C_5(\| u\|_{r(x)\sigma_\alpha^{+}}^{2r^{+}}+ \| u\|_{r(x)\sigma_\alpha^{+}}^{2r^{-}}+ \| u\|_{r(x)\sigma_\alpha^{-}}^{2r^{+}}+ \| u\|_{r(x)\sigma_\alpha^{-}}^{2r^{-}}) \\
&\geq C_4\min\{\|u\|^{p^{+}},\|u\|^{p^{-}}\}-C_6(\|u\|^{2r^{+}}+\|u\|^{2r^{-}}).
\end{align*}
Since $2r^{+}\geq 2r^{-}>p^{+}\geq p^{-}$, for $\rho>0$ sufficiently small, there exists $d>0$ such that $I[u]\geq d$ if $\|u\|=\rho$.\par
On the other hand, for fixed $u\in W^{s(x,y),p(x,y)}_0(\Omega)\setminus\{0\}$ and $t>1$, we have
\begin{align*}
I[tu] &\leq C_1' t^{p^{+}} \max\{\|u\|^{p^{+}},\|u\|^{p^{-}}\}-C_2' t^{2r^{-}} \int_{\Omega} \int_{\Omega}\frac{|u(x)|^{r(x)}|u(y)|^{r(y)}}{2r(x)|x-y|^{\frac{\alpha(x)+\alpha(y)}{2}}}dxdy.
\end{align*}
Hence, for $t>1$ sufficiently large, we have $I[tu] <0$. \par
Next, we check the (PS) condition. Let $\{u_n\}$ be a sequence such that $\{I[u_n]\}$ is bounded and $I'[u_n]\to 0$ ($n\to\infty$). For $\beta\in (1/(2r^{-}),1/p^{+})$, we have 
\begin{align*}
I[u_n]-\beta I'[u_n](u_n)&\geq \left(\frac{1}{p^{+}}-\beta\right)\varrho(u)+\left(\beta-\frac{1}{2r^{-}}\right) \int_{\Omega} \int_{\Omega}\frac{|u(x)|^{r(x)}|u(y)|^{r(y)}}{|x-y|^{\frac{\alpha(x)+\alpha(y)}{2}}}dxdy \\
&\geq \left(\frac{1}{p^{+}}-\beta\right) \min\{\|u\|^{p^{+}},\|u\|^{p^{-}}\}.
\end{align*}
Hence, $\{u_n\}$ is bounded. We may assume $u_n\rightharpoonup u$ weakly in $W^{s(x,y),p(x,y)}_0(\Omega)$ up to a subsequence. 
By the compact embeddings to $L^{r(x)\sigma_\alpha^{+}}(\Omega)$ and $L^{r(x)\sigma_\alpha^{-}}(\Omega)$, passing to a subsequence, $u_n\to u$ in $L^{r(x)\sigma_\alpha^{+}}(\Omega)\cap L^{r(x)\sigma_\alpha^{-}}(\Omega)$. By the Hardy-Littlewood-Sobolev inequality, 
\[
v\mapsto \int_{\Omega} \int_{\Omega}\frac{|v(x)|^{r(x)}|v(y)|^{r(y)}}{|x-y|^{\frac{\alpha(x)+\alpha(y)}{2}}}dxdy
\]
is strongly continuous in $L^{r(x)\sigma_\alpha^{+}}(\Omega)\cap L^{r(x)\sigma_\alpha^{-}}(\Omega)$. Hence,
\[
 \int_{\Omega} \int_{\Omega}\frac{|u_n(x)|^{r(x)}|u_n(y)|^{r(y)}}{|x-y|^{\frac{\alpha(x)+\alpha(y)}{2}}}dxdy\to  \int_{\Omega} \int_{\Omega}\frac{|u(x)|^{r(x)}|u(y)|^{r(y)}}{|x-y|^{\frac{\alpha(x)+\alpha(y)}{2}}}dxdy
\]
as $n\to\infty$. Combining this with $I'[u_n](u_n)\to 0$, we obtain $\varrho(u_n)\to\varrho(u)$. This also implies $\|u_n\|\to\|u\|$. By the uniform convexity of $W^{s(x,y),p(x,y)}(\Omega)$, we conclude $u_n\to u$ strongly. \par
From the above, we can apply the mountain pass lemma and obtain a nontrivial weak solution. 
\end{proof}

\end{document}